\newtheorem{thm}{Theorem}
\newtheorem{prop}[thm]{Proposition}
\newtheorem{lem}[thm]{Lemma}
\newtheorem{rk}[thm]{Remark}
\renewcommand{\leq}{\leqslant}
\renewcommand{\geq}{\geqslant}
\newcommand{\dd}{{\mathrm d}}
\newcommand{\ee}{{\mathrm e}}
\newcommand{\dE}{\mathbb{E}}
\newcommand{\dP}{\mathbb{P}}
\newcommand{\dR}{\mathbb{R}}
\newcommand{\cF}{\mathcal{F}}
\newcommand{\cU}{\mathcal{U}}
\newcommand{\BRA}[1]{{{\left\{#1\right\}}}} 
\newcommand{\PAR}[1]{{{\left(#1\right)}}} 
\newcommand{\SBRA}[1]{{{\left[#1\right]}}} 
\title{Elephant polynomials}
\author{Hélène Guérin\thanks{Université du Québec à Montréal (UQAM), Département de Mathématiques, Canada. Funded by Natural Sciences and Engineering Research Council of Canada (NSERC) discovery grant (RGPIN-2020-07239); 
\texttt{guerin.helene@uqam.ca}},\ Lucile Laulin\thanks{Université Paris Nanterre, Modal’X, UMR CNRS 9023, UPL, France, and FP2M, CNRS FR 2036; \texttt{lucile.laulin@math.cnrs.fr}}\ and Kilian Raschel\thanks{Université d'Angers, CNRS, Laboratoire Angevin de Recherche en Mathématiques, France. This project has received funding from the European Research Council (ERC) under the European Union's Horizon 2020 research and innovation programme under the Grant Agreement No. 759702 and from Centre Henri Lebesgue,
programme ANR-11-LABX-0020-01; \texttt{raschel@math.cnrs.fr}}}
\date{\today}
\begin{document}

\maketitle
\begin{abstract}
In this note, we study a family of polynomials that appear naturally when analysing the characteristic functions of the one-dimensional elephant random walk. These polynomials depend on a memory parameter $p$ attached to the model. For certain values of $p$, these polynomials specialise to classical polynomials, such as the Chebychev polynomials in the simplest case, or generating polynomials of various combinatorial triangular arrays (e.g.\ Eulerian numbers). Although these polynomials are generically non-orthogonal (except for $p=\frac{1}{2}$ and $p=1$), they have interlacing roots. Finally, we relate some algebraic properties of these polynomials to the probabilistic behaviour of the elephant random walk. Our methods are reminiscent of classical orthogonal polynomial theory and are elementary.
\end{abstract}

\begin{quote}
{\bf Keywords}: Elephant random walk; Explicit distribution; Orthogonal polynomials; Interlacing roots; Eulerian numbers

{\bf AMS MSC 2020}: 60E05, 	60E10, 60J10, 60G50, 05A10
\end{quote}

\section{Introduction and main results}

\paragraph{A one-parameter family of polynomials.}
In this paper, our main objective is to study a family of polynomials defined as follows: $R_1(x)=x$ and for $n\geq 1$,
\begin{equation}
\label{eq:recurrence_relation_Rn}
    R_{n+1}(x) = xR_n(x)-\frac{a}{n}(1-x^2)R_n'(x),
\end{equation}
where $a\in\mathbb R$ is some parameter. 
Due to a strong connection with the elephant random walk (ERW)\ when $a\in[-1,1]$, which we shall now present, we call them \emph{elephant polynomials}. The first three elephant polynomials are given by 
\begin{equation}
\label{eq:first_values_polynomials}
    R_1(x)  =x,\quad 
    R_2(x)  =(a+1)x^2 - a\quad  \text{and}\quad
    R_3(x)  =x((a+1)^2x^2- a(a+2)).
\end{equation}

\paragraph{The elephant random walk.}
The one-dimensional elephant random walk $(S_n)_{n\geq 0}$ is defined as follows \cite{Schutz2004}. We denote by $(X_n)_{n\geq 0}$ its successive steps. The elephant starts at the origin at time zero: $S_0 = 0$. For the first step $X_1$, the elephant moves one step to the right ($+1$) with probability $q$ or one step to the left ($-1$) with probability $1-q$, for some $q$ in $[0,1]$. The next steps are performed by choosing uniformly at random an integer $k$ among the previous times. Then the elephant moves exactly in the same direction as at time $k$ with probability $p\in[0,1]$, or in the opposite direction with probability $1-p$. In other words, defining for all $n \geq 1$, 
\begin{equation*}
   X_{n+1} = \left \{ \begin{array}{lll}
    +X_{k} &\text{with probability  $p$,} \\[1em]
    -X_{k} &\text{with probability $1-p$,}
   \end{array} \right.
\end{equation*}
with $k\sim\cU\BRA{1,\ldots,n}$, the position of the ERW at time $n+1$ is given by $S_{n+1}=S_{n}+X_{n+1}$. The probability $q$ is called the first step parameter (in this paper we will take $q=\frac{1}{2}$) and $p$ the memory parameter of the ERW.

\paragraph{The characteristic functions as trigonometric polynomials.}
The characteristic function of the process at time $n$ is defined by
\begin{equation}
\label{eq:def_charac}
    \varphi_n(t) = \mathbb E\SBRA{\ee^{i t S_n}}.
\end{equation}
Taking $q=\frac{1}{2}$, we have $\varphi_1(t) = \cos t$ and will justify later on that for $n\geq 1$, 
\begin{equation}
\label{eq:recurrence_varphi}
    \varphi_{n+1}(t) = \cos(t)\varphi_n(t)+\frac{a}{n}\sin(t)\varphi'_n(t),
\end{equation}
with $a=2p-1\in[-1,1]$. The sequence $\{\varphi_n(t)\}_{n\geq 1}$ naturally defines a sequence of polynomials $\{R_n(x)\}_{n\geq 1}$ via the formula
\begin{equation}
\label{eq:def_R_n}
   \varphi_n(t) = R_n(\cos t).
\end{equation}
Using \eqref{eq:recurrence_varphi}, one immediately obtain the recurrence relation \eqref{eq:recurrence_relation_Rn} mentioned at the beginning of the paper. Although the elephant random walk model is classical and well studied in the literature (see e.g.\ \cite{BaurBertoin2016,BercuLaulin2019,Bertoinb2021,Coletti2017,Coletti2019,Cressoni2007,daSilva2013,GLR2023,Kursten2016,qin2023recurrence,Schutz2004}), the properties of the polynomials we will prove in this paper have remained unnoticed.

Interestingly, specialising the parameter $a$ to some particular values, we recover classical orthogonal or combinatorial polynomials. Our main results in this direction can be summarized in Table~\ref{tab:recap}. 
The distribution of $S_n$ in these specific cases is consistent with \cite{Bertoinb2021} on the number of returns to zero, and the fact that the random walk is positive recurrent for $a<\frac{1}{2}$. 
\begin{table}[ht!]
\begin{center}
   \begin{tabular}{ | c | c | c |c |c | c |}
   \hline 
   $a$ & $-1$ & $-\frac{1}{2}$ & $0$ & $1$ & $\pm\infty$\\[0.1cm]
     \hline
     $R_n(x)$ & \begin{minipage}{0.14\textwidth}\begin{center}\smallskip\href{https://oeis.org/A101280}{A101280}

     (Prop.~\ref{prop:a=-1})
    \end{center}
     \end{minipage}& 
     \begin{minipage}{0.14\textwidth}
     \begin{center}
     \smallskip\href{https://oeis.org/A034839}{A034839}

     (Prop.~\ref{prop:a=-1/2})
    \end{center}
     \end{minipage} & \begin{minipage}{0.14\textwidth}\begin{center}\smallskip $x^n$

     (Lem.~\ref{lem:obvious_properties_EP})
    \end{center}
     \end{minipage}  & 
     \begin{minipage}{0.14\textwidth}\begin{center}\smallskip Chebychev

     (Lem.~\ref{lem:obvious_properties_EP})
    \end{center}
     \end{minipage}  &  
     \begin{minipage}{0.15\textwidth}\begin{center}\smallskip\href{https://oeis.org/A008293}{A008293}

     (Prop.~\ref{prop:a_infty})
    \end{center}
     \end{minipage}\\[0.4cm] \hline
      $\mathbb P(S_n=k)$ & 
       \begin{minipage}{0.14\textwidth}
     \begin{center}
     \smallskip $\frac{A(n-1,\frac{n+k}{2})}{(n-1)!}$

     (Prop.~\ref{prop:p=0_dist}) \cite{Mahmoud2008,Bertoin2023}
    \end{center}
     \end{minipage} & 
     \begin{minipage}{0.14\textwidth}\vspace{0.8mm}
     \begin{center}
     \smallskip $\frac{1}{2^{2n-1}}\binom{2n}{n+k}$
\smallskip

     (Prop.~\ref{prop:p=1/4_dist})
    \end{center}
     \end{minipage} 
      & 
      \begin{minipage}{0.14\textwidth}\vspace{0.8mm}
     \begin{center}
     \smallskip $\frac{1}{2^{n}}\binom{n}{\frac{n+k}{2}}$

     (simple RW)
    \end{center}
     \end{minipage}
      & $\frac{\delta_{k,+n}+\delta_{k,-n}}{2}$  & n.a.\\[.5cm]
     \hline
   \end{tabular}
 \end{center}
\caption{For certain values of $a$, the polynomials $R_n(x)$ specialise to classical polynomials or classical combinatorial sequences, and the exact distribution of the random walk is computed.
In the last row, the quantities $A(n,k)$ stand for the Eulerian numbers, see \href{https://oeis.org/A008292}{A008292} in \cite{oeis}; by definition, they count 
increasing rooted trees with $n+1$ nodes and $k$ leaves.}
\label{tab:recap}
\end{table}

The very special form of the polynomials $R_n$ at $a=0$ and $a=1$ in Table~\ref{tab:recap} admits a direct probabilistic interpretation. For $a=0$,  the ERW's memory parameter  $p=\frac{1+a}{2}=\frac{1}{2}$, and $\varphi_n(t)$ is the characteristic function of the classical simple random walk on $\mathbb Z$, namely $\cos(t)^n$. It is also clear that for $a=1$, one should get for $\varphi_n(t)$ the Chebychev polynomial of the first kind, namely, $\varphi_n(t) = \cos( nt)$, as from a probabilistic point of view the ERW's memory parameter $p=1$, this means that at time $n$ the elephant is either at $n$ or $-n$, with probability $\frac{1}{2}$ each. On the other hand, we don't have a probabilistic interpretation in the other cases appearing in Table~\ref{tab:recap}, for instance $a=-1$ ($p=0$) and $a=-\frac{1}{2}$ ($p=\frac{1}{4}$). Note that from a probabilistic viewpoint, it does not make sense to take $a>1$ or $a<-1$ (which corresponds to $p>1$ or $p<0$); however, the recursive definition \eqref{eq:recurrence_relation_Rn} is well defined for any value of $a$. 
 
In addition to the results presented in Table~\ref{tab:recap}, which only concern a few values of $a$, we will prove structural results on the roots of $R_n(x)$. Our first result analyses the situation $a>0$. 
\begin{prop}
\label{prop:interlacing}
    For $a>0$ and $n\geq 1$, $R_n$ admits $n$ real roots, which are mutually distinct and on $(-1,1)$. Moreover, the zeros of $R_n$ and the zeros of $R_{n+1}$ interlace.
\end{prop}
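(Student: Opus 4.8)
The plan is to argue by induction on $n$, using the recurrence \eqref{eq:recurrence_relation_Rn} as a substitute for the three-term recurrence that drives the classical interlacing proofs in orthogonal polynomial theory. Before starting the induction I would record three elementary facts. First, comparing leading coefficients in \eqref{eq:recurrence_relation_Rn} gives $c_{n+1}=(1+a)c_n$ with $c_1=1$, so $R_n$ has degree $n$ with leading coefficient $(1+a)^{n-1}>0$ whenever $a>0$. Second, evaluating \eqref{eq:recurrence_relation_Rn} at $x=\pm1$ kills the factor $1-x^2$, yielding $R_{n+1}(1)=R_n(1)$ and $R_{n+1}(-1)=-R_n(-1)$; together with $R_1(\pm1)=\pm1$ this gives $R_n(1)=1$ and $R_n(-1)=(-1)^n$ for all $n$. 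Third, the base case is immediate: $R_1(x)=x$ has its single root $0\in(-1,1)$, and $R_2$ has roots $\pm\sqrt{a/(a+1)}\in(-1,1)$ straddling $0$, so the interlacing holds for $n=1$.

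For the inductive step, assume $R_n$ has $n$ distinct roots $x_1<\cdots<x_n$, all in $(-1,1)$. The key observation is that at each such root the derivative term in \eqref{eq:recurrence_relation_Rn} is all that survives:
\[
  R_{n+1}(x_i) = -\frac{a}{n}(1-x_i^2)\,R_n'(x_i).
\]
Since $a>0$, $n>0$ and $1-x_i^2>0$, the sign of $R_{n+1}(x_i)$ is exactly opposite to that of $R_n'(x_i)$. Because the $x_i$ are simple roots, $R_n'$ alternates in sign along them: writing $R_n(x)=(1+a)^{n-1}\prod_j(x-x_j)$ one finds $\operatorname{sign}R_n'(x_i)=(-1)^{n-i}$, hence $\operatorname{sign}R_{n+1}(x_i)=(-1)^{n-i+1}$. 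Thus $R_{n+1}$ takes strictly alternating signs at $x_1,\dots,x_n$, and the intermediate value theorem produces a root of $R_{n+1}$ in each of the $n-1$ open intervals $(x_i,x_{i+1})$.

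It remains to locate two more roots near the endpoints, which is where the boundary computations enter. At the largest root $\operatorname{sign}R_n'(x_n)=+1$, so $R_{n+1}(x_n)<0$, whereas $R_{n+1}(1)=R_n(1)=1>0$; this forces a root in $(x_n,1)$. At the smallest root $\operatorname{sign}R_{n+1}(x_1)=(-1)^n$, while $R_{n+1}(-1)=(-1)^{n+1}$ has the opposite sign, producing a root in $(-1,x_1)$. Counting, I have found $n+1$ roots lying in disjoint open subintervals of $(-1,1)$; since $\deg R_{n+1}=n+1$ these are all of them, they are simple, and their locations $-1<y_1<x_1<y_2<x_2<\cdots<x_n<y_{n+1}<1$ give precisely the asserted interlacing, completing the induction.

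I expect the only delicate point to be the sign bookkeeping: one must verify that the alternation of $R_n'$ at the interior roots is compatible with the boundary signs $R_n(\pm1)$ obtained from the separate computation, so that the two extremal sign changes genuinely fall inside $(-1,x_1)$ and $(x_n,1)$ rather than coinciding with changes already counted between consecutive $x_i$. Everything else follows directly from the collapse of \eqref{eq:recurrence_relation_Rn} at a root of $R_n$ together with a standard sign-change count; notably, no orthogonality relation and no explicit formula for $R_n$ is required.
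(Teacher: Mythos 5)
Your proof is correct and follows essentially the same route as the paper's: induction on $n$, evaluating the recurrence \eqref{eq:recurrence_relation_Rn} at the roots of $R_n$ to get $R_{n+1}(x_i)=-\frac{a}{n}(1-x_i^2)R_n'(x_i)$, exploiting the sign alternation of $R_n'$ at simple roots, and applying the intermediate value theorem on the $n+1$ intervals determined by the roots together with the endpoints $\pm1$ (where $R_{n+1}(1)=1$ and $R_{n+1}(-1)=(-1)^{n+1}$). Your sign bookkeeping matches the paper's convention $\operatorname{sign}\bigl(R_n'(\alpha_k^{(n)})\bigr)=(-1)^{k+n}$, so there is nothing to add.
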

See Figure~\ref{fig:a>0} for an illustration of Proposition~\ref{prop:interlacing}. 
\begin{figure}[ht!]
    \centering
    \includegraphics[height=4cm]{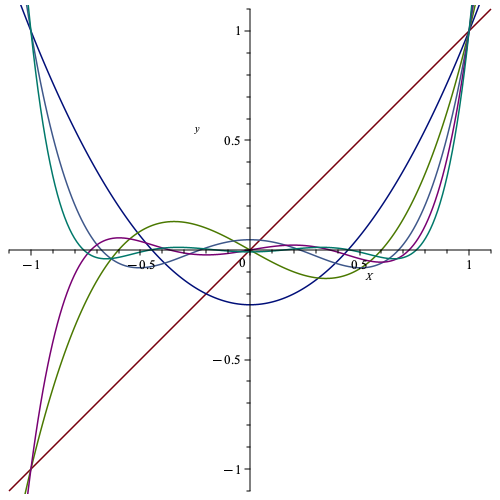}\qquad\qquad \includegraphics[height=4cm]{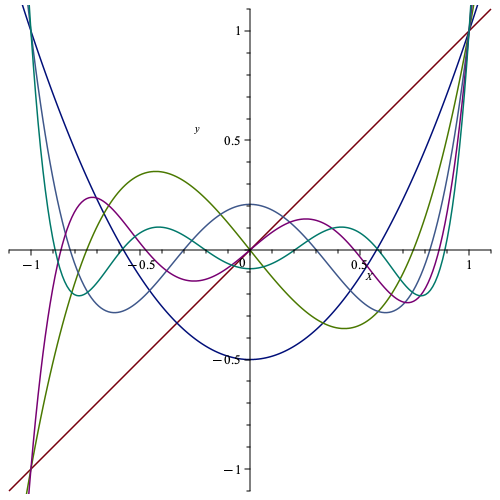}
    
    \includegraphics[height=4cm]{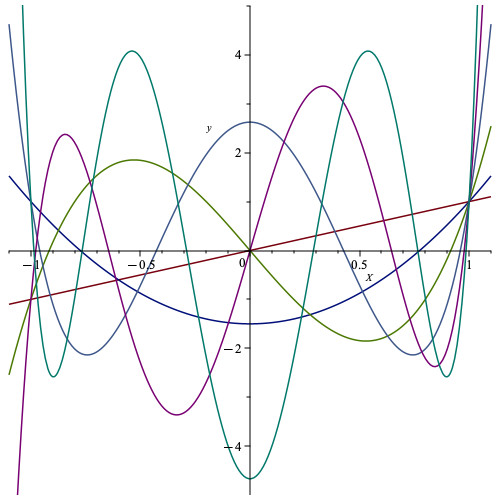}\qquad\qquad\includegraphics[height=4cm]{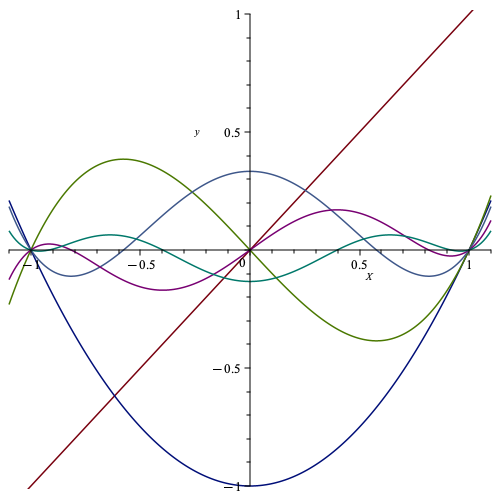}
    \caption{According to Proposition~\ref{prop:interlacing}, the first six polynomials $R_1(x),\ldots,R_6(x)$ have interlaced roots. Top left display: $a=\frac{1}{4}$; top right: $a=\frac{1}{2}$, these are the Chebychev polynomials; bottom left: $a=\frac{3}{2}$; bottom right: $a=+\infty$.}
    \label{fig:a>0}
\end{figure}
When $a=0$ we have $R_n(x)=x^n$ (see \eqref{eq:recurrence_relation_Rn}), so that all roots collapse at $0$. In the result below, we prove that the behavior of the roots dramatically changes when $a<0$, as they all become purely imaginary. However, they still possess an interlacing property.
\begin{prop}
\label{prop:interlacing_imaginary}
    Define $S_n(x) = (-i)^n R_n(ix)$. For $a<0$ and $n\geq 1$, $S_n(x)$ is a real polynomial, which admits $n$ real roots if $a\neq -1$, and $\vert n-2\vert $ real roots if $a=-1$, which are mutually distinct in all cases. Moreover, the zeros of $S_n$ and the zeros of $S_{n+1}$ interlace.
\end{prop}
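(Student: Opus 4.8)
The plan is to first rewrite the recurrence \eqref{eq:recurrence_relation_Rn} in terms of $S_n$. Since $S_n(x)=(-i)^nR_n(ix)$ gives $R_n(ix)=i^nS_n(x)$ and $R_n'(ix)=i^{n-1}S_n'(x)$, substituting $x\mapsto ix$ into \eqref{eq:recurrence_relation_Rn} and dividing by $i^{n+1}$ yields
\[
S_1(x)=x,\qquad S_{n+1}(x)=xS_n(x)+\tfrac{a}{n}(1+x^2)S_n'(x).
\]
As all coefficients are real, an immediate induction shows each $S_n$ is a real polynomial. I would also record the leading-coefficient bookkeeping: if $S_n$ has exact degree $d$ and leading coefficient $\ell$, then the coefficient of $x^{d+1}$ in $S_{n+1}$ equals $\ell(1+\tfrac{a}{n}d)$. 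For $-1<a<0$ this gives $\deg S_n=n$ with leading coefficient $(1+a)^{n-1}$ (positive exactly because $a>-1$), whereas at $a=-1$ the top coefficient cancels when $d=n$, the degree drops, and a short direct check shows $\deg S_n=\lvert n-2\rvert$ and that from $n\geq 2$ on the degree increases by one at each step.

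The heart of the argument is a sign-alternation lemma, in the spirit of orthogonal-polynomial theory. Suppose $P$ has $d\geq 1$ distinct real roots $r_1<\cdots<r_d$ and leading coefficient $\ell\neq 0$, and set $Q=xP+c(1+x^2)P'$ with $c<0$ and $1+cd>0$. At a root, $Q(r_j)=c(1+r_j^2)P'(r_j)$; since $c(1+r_j^2)<0$ and the signs of $P'(r_j)$ alternate as $\mathrm{sign}(\ell)(-1)^{d-j}$, the numbers $Q(r_1),\dots,Q(r_d)$ strictly alternate in sign, producing a root of $Q$ in each $(r_j,r_{j+1})$. The hypothesis $1+cd>0$ ensures $\deg Q=d+1$ with leading coefficient of the same sign as $\ell$; comparing $\mathrm{sign}\,Q(r_1)$ and $\mathrm{sign}\,Q(r_d)$ with the behaviour of $Q$ as $x\to-\infty$ and $x\to+\infty$ then forces one further root in $(-\infty,r_1)$ and one in $(r_d,+\infty)$. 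This accounts for all $d+1$ roots, which are therefore real, simple and interlaced with those of $P$.

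The proposition then follows by induction. In the generic regime $-1<a<0$ I apply the lemma with $c=a/n$ and $d=n$, so that $1+cd=1+a>0$; the base case $S_1(x)=x$, $S_2(x)=(1+a)x^2+a$ is verified by hand (the root $0$ of $S_1$ lies between the two symmetric roots of $S_2$), and the lemma simultaneously propagates the ``$n$ distinct real roots'' property and the interlacing. For $a=-1$ I compute $S_1,\dots,S_4$ explicitly, note that $\deg S_n=\lvert n-2\rvert$ stabilises, and apply the lemma for $n\geq 3$ with $c=-1/n$ and $d=n-2$, where $1+cd=2/n>0$; the only caveat is that interlacing for the very first indices, where $S_2$ is a nonzero constant, must be read as a vacuous statement.

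The step I expect to be most delicate is the value $a=-1$, where the recurrence loses a degree until it stabilises: the degree must be carried along in the induction hypothesis, and one must check that $1+cd$ stays strictly positive (here $=2/n$) so that the two outermost roots are genuinely created rather than escaping to infinity. A secondary point to flag is the range of validity: positivity of $1+a$ is used in an essential way, and indeed $S_2(x)=(1+a)x^2+a$ already loses its real roots once $a<-1$, so the statement is to be understood for $a\in[-1,0)$.
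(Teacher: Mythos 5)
Your argument is correct on the range where you run it, and it is essentially the paper's own proof: the paper likewise derives the recurrence $S_{n+1}(x)=xS_n(x)+\frac{a}{n}(1+x^2)S_n'(x)$, evaluates $S_{n+1}$ at the roots of $S_n$ to obtain alternating signs, and completes the root count using the behaviour at $\pm\infty$. Your only genuine departure is to package this as a standalone lemma with the explicit hypothesis $1+cd>0$ controlling both the degree of $Q$ and the sign of its leading coefficient; this makes the bookkeeping at $a=-1$ (where the degree is $n-2$ and $1+cd=2/n$) cleaner and more systematic than the paper's ad hoc adjustment of the dominant term.

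The caveat you raise about the range of validity is not a defect of your proposal but a genuine problem with the statement as written, and it is worth spelling out. For $a<-1$ one has $S_2(x)=(1+a)x^2+a$, whose roots satisfy $x^2=-a/(1+a)<0$, so $S_2$ has \emph{no} real roots, contradicting the claim that $S_n$ has $n$ distinct real roots for every $a<0$ with $a\neq-1$; the failure persists for larger $n$ (for instance at $a=-\frac{3}{2}$ one finds $S_4(x)=-\frac18x^4-\frac34x^2+\frac38$, which has only two real roots). The paper does attempt to treat $a<-1$ by switching the induction hypothesis to $\mathrm{sign}\bigl(S_n'(\beta_k^{(n)})\bigr)=(-1)^{k}$, but that hypothesis already fails at the base case $n=1$ (where $S_1'(0)=1>0$ while $(-1)^1=-1$) and is incompatible at the largest root with the stated limits $S_n(+\infty)=(-1)^{n-1}\infty$. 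So your restriction to $a\in[-1,0)$ is the correct reading of the proposition, your proof is complete on that range, and the observation that positivity of $1+a$ is essential should be promoted from a footnote to a correction of the statement.
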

See Figure~\ref{fig:a<0} for an illustration of Proposition~\ref{prop:interlacing_imaginary}.
\begin{figure}
    \centering
    \includegraphics[height=4cm]{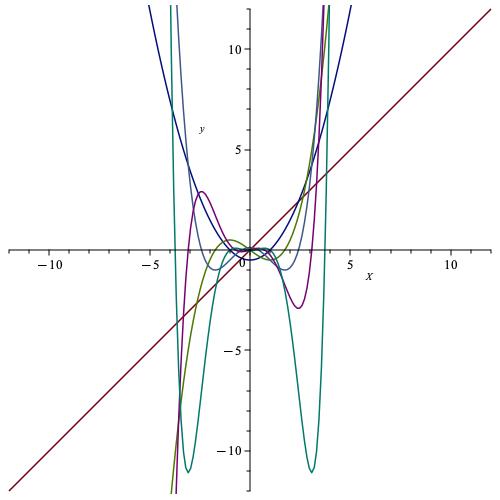}\qquad\qquad\includegraphics[height=4cm]{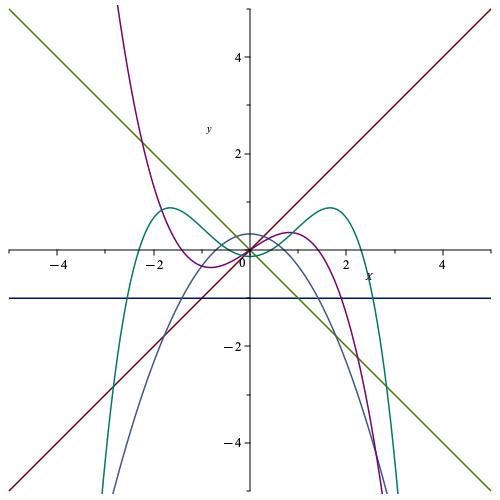}
    
    \includegraphics[height=4cm]{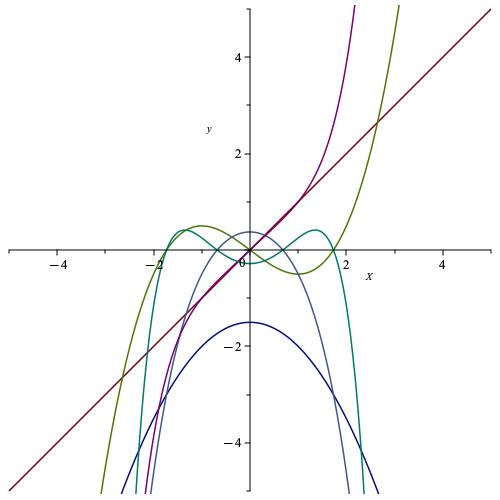}\qquad\qquad\includegraphics[height=4cm]{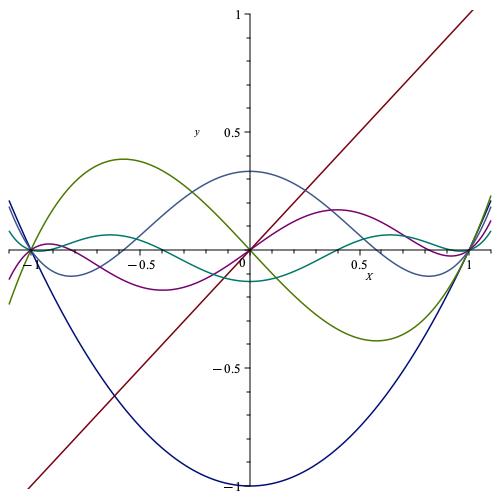}
    \caption{From top left  to bottom right, the first six polynomials $S_1(x),\ldots,S_6(x)$ for $a=-\frac{1}{2},-1,-\frac{3}{2},-\infty$  have interlaced roots, according to Proposition~\ref{prop:interlacing_imaginary} (except $S_2(x)=-1$ in the case $a=-1$)}
    \label{fig:a<0}
\end{figure}

The interlacing property is most classical for orthogonal polynomials \cite{zbMATH02510415}, so it is useful to notice that:
\begin{prop}
\label{prop:non_orthogonal}
Except for $a=0$ and $a=1$, the $R_n(x)$ and $S_n(x)$ are not orthogonal.
\end{prop}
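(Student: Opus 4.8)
The plan is to rely on the classical three-term recurrence characterization of orthogonal polynomials (Favard's theorem): a polynomial sequence $\{P_n\}$ with $\deg P_n = n$ is orthogonal with respect to some (quasi-definite) moment functional if and only if there exist constants $A_n\neq 0$, $B_n$ and $C_n\neq 0$ such that
\[
P_{n+1}(x) = (A_n x + B_n)P_n(x) - C_n P_{n-1}(x),\qquad n\geq 1.
\]
Thus, to prove non-orthogonality it suffices to show that the $R_n$ cannot obey a recurrence of this form once $a\notin\{0,1\}$.

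First I would record the two elementary facts that pin down the coefficients, both obtained by an immediate induction on \eqref{eq:recurrence_relation_Rn}: the leading coefficient of $R_n$ equals $(1+a)^{n-1}$, and $R_n$ has parity $(-1)^n$, i.e.\ $R_n(-x)=(-1)^nR_n(x)$. The case $a=-1$ is then dispatched at once, since there $\deg R_n<n$ for $n\geq 2$ (indeed $R_2\equiv 1$), so $\{R_n\}$ already fails the degree requirement of an orthogonal sequence. Assume henceforth $a\notin\{-1,0,1\}$, so $\deg R_n=n$, and suppose a recurrence as above holds. Comparing the coefficients of $x^{n+1}$ forces $A_n=1+a$, while the parity of the $R_n$ forces $B_n=0$: the term $B_nR_n$ is the unique summand of parity $(-1)^n$ (all others having parity $(-1)^{n+1}$), so it must vanish identically. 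Consequently, orthogonality would require $R_{n+1}(x)-(1+a)xR_n(x)$ to be a scalar multiple of $R_{n-1}(x)$ for every $n$.

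Next I would test this at $n=3$, the first index where an obstruction appears. For $n=2$ one checks directly from \eqref{eq:first_values_polynomials} that $R_3(x)-(1+a)xR_2(x)=-a\,R_1(x)$, so nothing goes wrong there. Computing $R_4$ from \eqref{eq:recurrence_relation_Rn}, however, one finds
\[
R_4(x)-(1+a)xR_3(x) = -\tfrac{a(a^2+2a+3)}{3}\,x^2 + \tfrac{a^2(a+2)}{3},
\]
an even polynomial of degree $2$. Requiring this to be proportional to $R_2(x)=(1+a)x^2-a$ means matching the ratio of the $x^2$-coefficient to the constant term; eliminating the proportionality constant collapses exactly to the condition $(a+2)(1+a)=a^2+2a+3$, i.e.\ $a(a-1)=0$. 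Since $a\notin\{0,1\}$ no constant $C_3$ exists, contradicting the recurrence, and the $R_n$ are not orthogonal.

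Finally, for the $S_n$ I would observe that the normalization $S_n(x)=(-i)^nR_n(ix)$ transports three-term recurrences back and forth: substituting $y=ix$ into $R_{n+1}(y)=(1+a)\,y\,R_n(y)-C_nR_{n-1}(y)$ and multiplying by $(-i)^{n+1}$ yields $S_{n+1}(x)=(1+a)\,x\,S_n(x)+C_nS_{n-1}(x)$, and conversely. Hence $\{S_n\}$ satisfies an orthogonal three-term recurrence if and only if $\{R_n\}$ does, so the same conclusion applies. The only genuinely computational step, and thus the main point to get right, is the explicit evaluation of $R_4(x)-(1+a)xR_3(x)$ and the verification that proportionality to $R_2$ reduces precisely to $a\in\{0,1\}$; everything else is bookkeeping with leading coefficients and parity.
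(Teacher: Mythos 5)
Your proof is correct and follows essentially the same route as the paper: assume a three-term recurrence, force $A_n=1+a$ and $B_n=0$ from the leading coefficient and parity, and exhibit the obstruction at $n=3$ by showing that $R_4(x)-(1+a)xR_3(x)$ is proportional to $R_2(x)$ only when $a\in\{0,1\}$ (the paper phrases this via the resultant, equal to $\frac{a^4(a-1)^2}{9}$, which is just the square of your $2\times 2$ proportionality determinant $\frac{a^2(1-a)}{3}$). Your explicit handling of $a=-1$ via the degree collapse and your transport of the recurrence to $S_n$ are in fact slightly more careful than the paper's, and the only blemish is a cosmetic one: the displayed condition $(a+2)(1+a)=a^2+2a+3$ is equivalent to $a=1$ alone (you cancelled a factor of $a$), not to $a(a-1)=0$, but this is harmless since $a\neq 0$ is already assumed at that point.
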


Finally, from a probabilistic point of view, it is natural to compute the asymptotics of $R_n'(1)$ as $n\to\infty$, in order to guess limit theorems for the elephant random walk. Indeed, if $a=0$, the model reduces to the classical simple random walk, for which the most classical central limit theorem yields
\begin{equation*}
    \varphi_n\Bigl(\frac{t}{\sqrt n}\Bigr)=R_n\Bigl(\cos \Bigl(\frac{t}{\sqrt n}\Bigr)\Bigr)\underset{n\to\infty}{\longrightarrow} \ee^{-\frac{t^2}{2}}.
\end{equation*}
When $n\to\infty$, 
\begin{equation*}
   R_n\Bigl(\cos \Bigl(\frac{t}{\sqrt n}\Bigr)\Bigr)=R_n\Bigl(1-\frac{t^2}{2n}+O\Bigl(\frac{1}{n^2}\Bigr)\Bigr)=1-R_n'(1)\frac{t^2}{2n}+(R_n'(1) + 3R_n''(1))\frac{t^4}{24n^2}+\ldots, 
\end{equation*}
so the asymptotics of the derivatives of $R_n(x)$ at $x=1$ should reflect the correct scaling to have a central limit theorem.

\begin{prop}
\label{prop:values_Rn'(1)}
For all $n\geq 1$ and $a\in[0,1]$,
    \begin{equation}
    \label{eq:values_Rn'(1)}
    R_n'(1) = \frac{\Gamma(n + 2)\Gamma(2a) - \Gamma( n + 1+2a )}{(1 - 2a)\Gamma(2a)\Gamma(n + 1)}.
\end{equation}
Moreover, we have the following asymptotics as $n\to\infty$:
\begin{equation*}
    R_n'(1) \sim \left\{\begin{array}{ll}
      \displaystyle  \frac{n}{1-2a} & \text{if } a< \frac{1}{2},\medskip\\
      \displaystyle  n\ln(n)& \text{if } a= \frac{1}{2},\medskip\\
      \displaystyle \frac{n^{2a}}{(2a-1)\Gamma(2a)} & \text{if } a> \frac{1}{2}.
      \end{array}\right.
\end{equation*}
\end{prop}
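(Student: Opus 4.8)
The plan is to convert the statement about the polynomial $R_n$ into a first-order linear recurrence for the single number $R_n'(1)$, solve that recurrence in closed form, and read off the asymptotics. First I would observe that $R_n(1)=1$ for every $n$: evaluating \eqref{eq:recurrence_relation_Rn} at $x=1$ annihilates the factor $1-x^2$, so $R_{n+1}(1)=R_n(1)$, and $R_1(1)=1$. Differentiating \eqref{eq:recurrence_relation_Rn} gives
\begin{equation*}
R_{n+1}'(x)=R_n(x)+xR_n'(x)-\frac{a}{n}\bigl(-2xR_n'(x)+(1-x^2)R_n''(x)\bigr),
\end{equation*}
and evaluating at $x=1$ the term carrying $R_n''$ drops out, leaving, with $u_n:=R_n'(1)$,
\begin{equation*}
u_{n+1}=1+\Bigl(1+\frac{2a}{n}\Bigr)u_n,\qquad u_1=1.
\end{equation*}
As a sanity check this returns $u_1=1$, $u_2=2(1+a)$, $u_3=2a^2+4a+3$, matching \eqref{eq:first_values_polynomials}.

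Next I would solve this inhomogeneous linear recurrence by the summation-factor method. The homogeneous solution is $v_n=\prod_{k=1}^{n-1}\frac{k+2a}{k}=\frac{\Gamma(n+2a)}{\Gamma(2a+1)\Gamma(n)}$, normalised so that $v_1=1$. Setting $u_n=v_nc_n$ turns the recurrence into the telescoping relation $c_{n+1}-c_n=1/v_{n+1}$, whence $u_n=v_n\sum_{j=1}^n 1/v_j$. Up to the constant $\Gamma(2a+1)$, the sum to evaluate is therefore $\sum_{j=1}^n \Gamma(j)/\Gamma(j+2a)$.

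The crux is a second telescoping. Using $\Gamma(z+1)=z\Gamma(z)$ one verifies the identity
\begin{equation*}
\frac{\Gamma(j)}{\Gamma(j+2a)}=\frac{1}{1-2a}\Bigl(\frac{\Gamma(j+1)}{\Gamma(j+2a)}-\frac{\Gamma(j)}{\Gamma(j+2a-1)}\Bigr),
\end{equation*}
so that the sum collapses to the boundary values $\frac{1}{1-2a}\bigl(\frac{\Gamma(n+1)}{\Gamma(n+2a)}-\frac{1}{\Gamma(2a)}\bigr)$. Substituting back into $u_n=v_n\sum 1/v_j$ and simplifying with the functional equation yields the announced closed form \eqref{eq:values_Rn'(1)}. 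Alternatively, once the closed form is guessed, one can bypass the two telescopings entirely and simply verify the initial value together with the recurrence $u_{n+1}=1+(1+\frac{2a}{n})u_n$ by a direct induction. The one value needing separate care is $a=\frac12$, where $1-2a$ vanishes: there the recurrence reads $u_{n+1}=1+\frac{n+1}{n}u_n$, whose solution is $u_n=n\sum_{j=1}^n\frac1j=nH_n$, which is also the limit of the general formula as $a\to\frac12$ (using $\psi(n+1)+\gamma=H_n$).

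Finally the asymptotics follow by writing the closed form as $R_n'(1)=\frac{1}{1-2a}\bigl(n-\frac{1}{\Gamma(2a)}\frac{\Gamma(n+2a)}{\Gamma(n)}\bigr)$ and invoking the standard ratio estimate $\Gamma(n+2a)/\Gamma(n)\sim n^{2a}$ coming from Stirling. For $a<\frac12$ the linear term dominates, giving $R_n'(1)\sim \frac{n}{1-2a}$; for $a>\frac12$ the $n^{2a}$ term dominates, giving $R_n'(1)\sim \frac{n^{2a}}{(2a-1)\Gamma(2a)}$; and the borderline $a=\frac12$ gives $nH_n\sim n\ln n$. I expect the telescoping identity for $\sum_j \Gamma(j)/\Gamma(j+2a)$ to be the main point to get right, since it is the step that produces the $1-2a$ denominator and forces the separate treatment of $a=\frac12$, whereas the differentiation, the summation factor and the Stirling estimates are routine.
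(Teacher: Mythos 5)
Your strategy is the same as the paper's: differentiate \eqref{eq:recurrence_relation_Rn}, evaluate at $x=1$ to get $u_{n+1}=1+(1+\frac{2a}{n})u_n$ with $u_1=1$, and then identify the solution (the paper verifies the closed form by induction, or alternatively solves an ODE for the generating function $\sum_n R_n'(1)x^n$; your summation-factor/telescoping derivation is a constructive variant of the same step). The recurrence, the summation factor $v_n=\frac{\Gamma(n+2a)}{\Gamma(2a+1)\Gamma(n)}$, the telescoping of $\sum_j\Gamma(j)/\Gamma(j+2a)$, the $a=\frac12$ case and the Stirling asymptotics are all correct.

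There is, however, one genuine problem: your claim that the simplification ``yields the announced closed form \eqref{eq:values_Rn'(1)}'' is false. What your computation actually gives is
\begin{equation*}
  u_n=\frac{1}{1-2a}\Bigl(n-\frac{\Gamma(n+2a)}{\Gamma(2a)\,\Gamma(n)}\Bigr)
     =\frac{\Gamma(n+1)\Gamma(2a)-\Gamma(n+2a)}{(1-2a)\,\Gamma(2a)\,\Gamma(n)},
\end{equation*}
which is the right-hand side of \eqref{eq:values_Rn'(1)} with $n$ replaced by $n-1$. The two expressions are not equal: at $n=1$ the display \eqref{eq:values_Rn'(1)} evaluates to $2(1+a)$ (and to $n+1$ in the limit $a\to0$), whereas $R_1'(1)=1$ and $R_n'(1)=n$ for $a=0$; so the formula as printed in the statement actually computes $R_{n+1}'(1)$. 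Your derivation is the correct one and in effect uncovers an index shift in the stated identity, but as written your proof asserts an equality that fails, and your suggested ``direct induction on \eqref{eq:values_Rn'(1)}'' would break down at the initial value check. You should either record the corrected closed form above or explicitly note the discrepancy. The asymptotic statements are unaffected, since replacing $n$ by $n+1$ does not change any of the three leading-order behaviours.
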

As $R_n'(1)$ represents the second-order moment of the elephant random walk, the proposition above aligns with the initial (and by now well-established) observations regarding the asymptotic behavior of this moment, such as in \cite{Schutz2004}.
The values of the higher-order derivatives $R_n''(1)$, etc., seems to be fairly more complicated.

\section{Interlacing property}

We start with stating elementary properties of the $R_n(x)$.
\begin{lem}
\label{lem:obvious_properties_EP}
    For all $a\in\mathbb R$, $R_n$ is odd (resp.\ even) for odd (resp.\ even) values of $n$, and
    one has $R_n(1)=1$ and $R_n(-1)=(-1)^n$. For $a\in \mathbb R\setminus \{-1\}$, the polynomial $R_n$ has degree $n$ and dominant coefficient $(a+1)^{n-1}$. Moreover, for $a=0$ one has $R_n(x)=x^n$ and for $a=1$, $R_n(x)=T_n(x)$, the $n$th Chebychev polynomial of the first kind. For $a=-1$, one has $R_1(x)=x$ and for $n\geq 2$, $R_n(x)$ has degree $ n-2$, with dominant coefficient $\frac{2^{n-2}}{(n-1)!}$.
  Furthermore, when $a>0$ the coefficients of the polynomial $R_n$ have alternating signs, and when $a\in[-1,0]$ the coefficients are nonnegative. Viewed as a polynomial in $a$, $R_n$ has degree $n-1$.
\end{lem}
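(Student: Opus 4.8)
The plan is to prove every assertion by induction on $n$ directly from the recurrence \eqref{eq:recurrence_relation_Rn}, since each one is a statement about the coefficients of $R_n$. I would first dispatch the easy structural claims. The parity statement follows from the observation that multiplication by $x$ reverses parity, multiplication by $1-x^2$ preserves it, and differentiation reverses it; hence if $R_n$ has the parity of $n$, then both $xR_n$ and $(1-x^2)R_n'$ have the parity of $n+1$, and so does $R_{n+1}$. The values at $\pm1$ are immediate: evaluating \eqref{eq:recurrence_relation_Rn} at $x=\pm1$ annihilates the derivative term because $1-x^2=0$, leaving $R_{n+1}(1)=R_n(1)$ and $R_{n+1}(-1)=-R_n(-1)$, which together with $R_1(\pm1)=\pm1$ give the claim. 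The case $a=0$ is trivial, as the recurrence reduces to $R_{n+1}=xR_n$.

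For the degree and leading coefficient when $a\neq -1$, I would write $R_n(x)=c_nx^n+\cdots$ and read off from \eqref{eq:recurrence_relation_Rn} that the coefficient of $x^{n+1}$ in $R_{n+1}$ equals $c_n-\frac{a}{n}(-nc_n)=(1+a)c_n$; starting from $c_1=1$ this gives $c_n=(a+1)^{n-1}$, which is nonzero precisely when $a\neq -1$, confirming $\deg R_n=n$. The Chebyshev identity at $a=1$ is cleanest through the substitution $x=\cos\theta$: using $T_n(\cos\theta)=\cos(n\theta)$ and $(1-x^2)T_n'(x)=n\sin\theta\sin(n\theta)$ at $x=\cos\theta$, the right-hand side of \eqref{eq:recurrence_relation_Rn} becomes $\cos\theta\cos(n\theta)-\sin\theta\sin(n\theta)=\cos((n+1)\theta)$, so the $T_n$ obey the same recurrence and initial data. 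The case $a=-1$ is the one genuine degeneration: there the factor $(1+a)$ above vanishes, so the top coefficient cancels. I would treat it as a separate induction with base cases $R_1=x$ and $R_2=1$, and then, assuming $R_n=d_nx^{n-2}+\cdots$ for $n\geq2$, compute that the coefficient of $x^{n-1}$ in $R_{n+1}$ equals $\frac{2}{n}d_n$; solving $d_{n+1}=\frac{2}{n}d_n$ from $d_2=1$ yields $d_n=\frac{2^{n-2}}{(n-1)!}$ and degree $n-2$.

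The heart of the argument is the sign analysis, for which I would introduce the coefficient recursion explicitly. Writing $R_n(x)=\sum_m c_{n,m}x^{n-2m}$ (with the convention $c_{n,m}=0$ outside the range $0\leq m\leq\lfloor n/2\rfloor$), the recurrence \eqref{eq:recurrence_relation_Rn} gives
\begin{equation*}
c_{n+1,m}=c_{n,m}\Bigl(1+\tfrac{a(n-2m)}{n}\Bigr)-\tfrac{a(n-2m+2)}{n}\,c_{n,m-1}.
\end{equation*}
For $a\in[-1,0]$ one has $1+\frac{a(n-2m)}{n}\geq 1+a\geq 0$ (using $0\leq n-2m\leq n$ and $a\geq-1$) and $-\frac{a(n-2m+2)}{n}\geq 0$ (using $a\leq 0$), so nonnegativity of the coefficients propagates from $c_{1,0}=1$. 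For $a>0$ I would substitute $c_{n,m}=(-1)^m b_{n,m}$, turning the recursion into $b_{n+1,m}=b_{n,m}(1+\frac{a(n-2m)}{n})+\frac{a(n-2m+2)}{n}b_{n,m-1}$, in which both coefficients are positive wherever $c_{n,m}\neq 0$, so the $b_{n,m}$ stay strictly positive and the signs of $c_{n,m}$ genuinely alternate. Finally, for the degree in $a$, an easy induction on the same recursion shows $\deg_a c_{n,m}\leq n-1$ (each step raises the $a$-degree by at most one), while $c_{n,0}=(a+1)^{n-1}$ already realizes degree exactly $n-1$, giving $\deg_a R_n=n-1$.

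The main obstacle is not any single deep step but the bookkeeping in the sign claims: one must fix the index range $0\leq m\leq\lfloor n/2\rfloor$, handle the boundary coefficients carefully (at the extreme $m$ the first term drops out because $c_{n,m}=0$ there, even though its coefficient $1+\frac{a(n-2m)}{n}$ may turn negative, and positivity is then supplied by the second term), and verify that no interior coefficient accidentally vanishes so that the alternation for $a>0$ is strict. Everything else reduces to routine single-variable inductions once the coefficient recursion above is in hand.
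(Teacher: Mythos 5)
Your proof is correct and follows essentially the same route as the paper, which simply declares most of these facts obvious, verifies the Chebychev case via the identity $T_n(\cos t)=\cos(nt)$ and the recurrence (exactly your substitution $x=\cos\theta$), and notes that the sign claims follow by induction on the coefficients. You have merely written out in full the coefficient recursion and boundary bookkeeping that the paper leaves implicit; all your computations (leading coefficient $(1+a)c_n$, the degenerate $a=-1$ recursion $d_{n+1}=\frac{2}{n}d_n$, and the sign propagation via $c_{n,m}=(-1)^m b_{n,m}$) check out.
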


\begin{proof}
    The above lemma is obvious, with perhaps the exception of the connection with Chebychev polynomials. By definition $T_n(\cos t) = \cos (nt)$, and then it is clear that $\cos (nt)$ satisfies \eqref{eq:recurrence_varphi} as well as the initial condition, when $a=1$. The sign of the coefficients is easily obtained by induction.
\end{proof}

\begin{proof}[Proof of the identity~\eqref{eq:recurrence_varphi}]
Remember that $\varphi_n(t)$ is the characteristic function of the position $S_n$ of the elephant random walk at time $n$, see \eqref{eq:def_charac}. Let us denote by $\PAR{\cF_n}_{n\geq 0}$ the natural filtration of $\PAR{S_n}_{n\geq 0}$. It is known that by simple calculations we have  $\dP\PAR{X_{n+1}=1\vert\cF_n}=\frac{1}{1}\PAR{1+a\frac{S_n}{n}}$. Then,
\begin{align*}
    \varphi_{n+1}(t)&=\dE\SBRA{\ee^{itS_{n+1}}}\\
    &=\dE\SBRA{\ee^{itS_n}\dE\SBRA{\ee^{itX_{n+1}}\vert \cF_n}} \\
&=\dE\SBRA{\ee^{itS_n}\PAR{\frac{1}{2}\PAR{1+a\frac{S_n}{n}}\ee^{it}+\frac{1}{2}\PAR{1-a\frac{S_n}{n}}\ee^{-it}}}\\
&=\frac{\ee^{it}+\ee^{-it}}{2}\dE\SBRA{\ee^{itS_n}}+a\frac{\ee^{it}-\ee^{-it}}{2n}\dE\SBRA{S_n\ee^{itS_n}}\\
&=\cos(t)\varphi_n(t)+\frac{a}{n}\sin(t)\varphi'_n(t).\qedhere
\end{align*}
\end{proof}

\begin{proof}[Proof of Proposition~\ref{prop:interlacing}]
The techniques used are elementary and reminiscent of orthogonal polynomials theory, see \cite{zbMATH02510415}.
Let us denote the $n$ (a priori complex and non-distinct) roots of $R_n$ by $\alpha_1^{(n)},\ldots,\alpha_n^{(n)}$. We will prove by induction that for all $n\geq 1$, the $\alpha_k^{(n)}$ are real, are in $(-1,1)$ and satisfy $\text{sign}\bigl(R_n'(\alpha_k^{(n)})\bigr)=(-1)^{k+n}$. In particular, they should be simple roots.

This is easily verified for $n=1$, with $R_1(x) = x$ and $\alpha_1^{(1)}=0$. Let us now assume that the previous assumption holds for some $n$. Using \eqref{eq:recurrence_relation_Rn}, one immediately obtain that
\begin{equation*}
    R_{n+1}\bigl(\alpha_k^{(n)}\bigr)=-\frac{a}{n} \bigl(1-\bigl(\alpha_k^{(n)}\bigr)^2\bigr)R_n'\bigl(\alpha_k^{(n)}\bigr).
\end{equation*}
In particular, since $a>0$ we deduce that 
\begin{equation}
    \label{eq:signs_intermediate}
    \text{sign}\bigl(R_{n+1}(\alpha_k^{(n)})\bigr)=(-1)^{k+n+1}.
\end{equation}
Introduce $\alpha_0^{(n)}=-1$ and $\alpha_{n+1}^{(n)}=1$. Note that \eqref{eq:signs_intermediate} is true for $k=0$ and $k=n+1$ as well, using $R_{n+1}(1)=1$ and $R_{n+1}(-1)=(-1)^{n+1}$ (see Lemma~\ref{lem:obvious_properties_EP}). We now apply to the function $R_{n+1}$ the intermediate value theorem on the interval $[\alpha_k^{(n)},\alpha_{k+1}^{(n)}]$, for any $k$ from $0$ to $n$. Since by \eqref{eq:signs_intermediate} the signs of $R_{n+1}(\alpha_k^{(n)})$ and $R_{n+1}(\alpha_{k+1}^{(n)})$ are opposite, we immediately obtain the existence of a zero, which we denote by $\alpha_{k+1}^{(n+1)}$. By construction we obtain $n+1$ points in $(-1,1)$, which are mutually distinct. Finally, since these zeros must be simple, the derivatives $R_{n+1}'(\alpha_k^{(n+1)})$ must be non-zero and of alternating sign.
\end{proof}

\begin{proof}[Proof of Proposition~\ref{prop:interlacing_imaginary}]
It is very similar to the one of Proposition~\ref{prop:interlacing}. Let us first assume that $a\in(-1,0)$. Using \eqref{eq:recurrence_relation_Rn} we obtain the recurrence relation
\begin{equation}
\label{eq:recurrence_relation_Sn}
    S_{n+1}(x) = xS_n(x) +\frac{a}{n}(1+x^2)S_n'(x),
\end{equation}
which is valid for all $n\geq1$. The dominant coefficient of $S_n(x)$ is $(a+1)^{n-1}x^n$ by Lemma~\ref{lem:obvious_properties_EP}, so that 
\begin{equation}
    \label{eq:values_Sn_infty}
    S_n(+\infty)=+\infty\quad \text{and}\quad S_n(-\infty)=(-1)^n\infty.
\end{equation} 
Following the proof of the previous proposition, let us denote the $n$ (a priori complex and non-distinct) roots of $S_n$ by $\beta_1^{(n)},\ldots,\beta_n^{(n)}$. We will prove by induction that for all $n\geq 1$, the $\beta_k^{(n)}$ are real and satisfy $\text{sign}\bigl(S_n'(\beta_k^{(n)})\bigr)=(-1)^{k+n}$. In particular, they should be simple roots. Using \eqref{eq:recurrence_relation_Sn} we find
\begin{equation*}
    S_{n+1}\bigl(\beta_k^{(n)}\bigr)=\frac{a}{n} \bigl(1+\bigl(\beta_k^{(n)}\bigr)^2\bigr)S_n'\bigl(\beta_k^{(n)}\bigr).
\end{equation*}
In particular, since $a<0$ we deduce that 
\begin{equation}
    \label{eq:signs_intermediate_S}
    \text{sign}\bigl(S_{n+1}(\beta_k^{(n)})\bigr)=(-1)^{k+n+1}.
\end{equation}
Introduce $\beta_0^{(n)}=-\infty$ and $\beta_{n+1}^{(n)}=+\infty$. Note that \eqref{eq:signs_intermediate_S} is true for $k=0$ and $k=n+1$ as well, using the limits $S_{n+1}(\pm\infty)$ computed in \eqref{eq:values_Sn_infty}. We now apply to the function $S_{n+1}$ the intermediate value theorem on the interval $[\beta_k^{(n)},\beta_{k+1}^{(n)}]$, for any $k$ from $0$ to $n$. Since by \eqref{eq:signs_intermediate_S} the signs of $S_{n+1}(\beta_{k}^{(n)})$ and $S_{n+1}(\beta_{k+1}^{(n)})$ are opposite, we immediately obtain the existence of a zero, which we denote by $\beta_{k+1}^{(n+1)}$. By construction we obtain $n+1$ points in $\dR$, which are mutually distinct. Finally, since these zeros must be simple, the derivatives $S_{n+1}'(\beta_k^{(n+1)})$ must be non-zero and of alternating sign.

If now $a<-1$, we prove similarly by induction that for all $n\geq 1$, the $\beta_k^{(n)}$ are real and satisfy $\text{sign}\bigl(S_n'(\beta_k^{(n)})\bigr)=(-1)^{k}$, using the following changes: \eqref{eq:values_Sn_infty} should be replaced by $S_n(+\infty)=(-1)^{n-1}\infty$ and $S_n(-\infty)=-\infty$; Equation~\eqref{eq:signs_intermediate_S} should be modified as follows: $\text{sign}\bigl(S_{n+1}(\beta_k^{(n)})\bigr)=(-1)^{k+1}$.

Finally, the case $a=-1$ is very similar to the previous situation where $a<-1$, with the only difference that the degree of $S_n$ is $n-2$. More precisely, one has $S_1(x) = x$, $S_2(x)=-1$ and for $n\geq 2$, the dominant term of $S_{n}(x)$ is $-\frac{2^{n-2}}{(n-1)!}x^{n-2}$. The proof continues similarly as in the case $a<-1$.
\end{proof}

\begin{proof}[Proof of Proposition~\ref{prop:non_orthogonal}]
Let us do the proof in the case of $R_n(x)$; one would conclude in the case of the sequence $S_n(x)$ by similar arguments. If the sequence $R_n(x)$ is orthogonal, it has to satisfy a recurrence relation of the form
\begin{equation}
\label{eq:guessed_recurrence_poly}
    R_{n+1}(x) = (\alpha_nx+\beta_n)R_n(x) + \gamma_nR_{n-1}(x).
\end{equation}
Analysing the dominant coefficient (see Lemma~\ref{lem:obvious_properties_EP}), one should have $\alpha_n=a+1$. Moreover, due to the parity of the $R_n(x)$, see again Lemma~\ref{lem:obvious_properties_EP}, one must take $\beta_n=0$. We thus deduce $\gamma_n=-a$ from $R_n(1)=1$ (see Lemma~\ref{lem:obvious_properties_EP}),  so that \eqref{eq:guessed_recurrence_poly} becomes
\begin{equation}
\label{eq:guessed_recurrence_poly_bis}
    R_{n+1}(x) = (a+1)x R_n(x) -a R_{n-1}(x).
\end{equation}
Actually, setting $R_0(x)=1$, the identity \eqref{eq:guessed_recurrence_poly_bis}
holds true for $n=1$ and $2$; compare with \eqref{eq:first_values_polynomials}. 

We now look at \eqref{eq:guessed_recurrence_poly_bis} in the case $n=3$. Computing $R_4(x)-(a+1)xR_3(x)$ gives a second-degree polynomial, which if \eqref{eq:guessed_recurrence_poly_bis} were true, should be proportional to $R_2(x)$. However, an elementary computation shows that the resultant of the polynomials $R_4(x)-(a+1)xR_3(x)$ and $R_2(x)$ is given by $\frac{a^4(a - 1)^2}{9}$. In other words, except for $a=0$ and $a=1$, the recurrence relation \eqref{eq:guessed_recurrence_poly_bis} (and thus \eqref{eq:guessed_recurrence_poly} as well)\ is not satisfied for $n=3$.
\end{proof}

\section{Special cases of the memory parameter}

\subsection{The case \texorpdfstring{$a=-\frac{1}{2}$}{of a memory parameter one fourth}}
\begin{prop}
\label{prop:a=-1/2}
Let $a=-\frac{1}{2}$. For all $n\geq 1$, we have
\begin{equation*}
    R_n(x) = \frac{1}{2^{n-1}}\sum_{k=0}^{\lfloor \frac{n}{2}\rfloor} \binom{n}{2k}x^{n-2k}=\left(\frac{x-1}{2}\right)^n+\left(\frac{x+1}{2}\right)^n.
\end{equation*}
\end{prop}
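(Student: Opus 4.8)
The plan is to take the second (closed) expression $R_n(x)=\left(\frac{x-1}{2}\right)^n+\left(\frac{x+1}{2}\right)^n$ as the natural candidate, to verify that it satisfies the defining recurrence \eqref{eq:recurrence_relation_Rn} at $a=-\frac12$ by induction on $n$, and then to recover the first (binomial) expression by an elementary application of the binomial theorem. Since the sequence determined by \eqref{eq:recurrence_relation_Rn} together with $R_1(x)=x$ is unique, showing that the candidate obeys the same recurrence and initial condition forces it to coincide with $R_n$.

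For the inductive step I would introduce the shorthand $u=\frac{x-1}{2}$ and $v=\frac{x+1}{2}$, so that the candidate is the power sum $p_n:=u^n+v^n$. The whole argument hinges on three elementary identities: $u+v=x$, $uv=\frac{x^2-1}{4}$ (equivalently $1-x^2=-4uv$), and $\frac{\dd}{\dd x}p_n=\frac{n}{2}(u^{n-1}+v^{n-1})$, the last because $u'=v'=\frac12$. Substituting these into the right-hand side of \eqref{eq:recurrence_relation_Rn} with $a=-\frac12$, that is into $xp_n+\frac{1}{2n}(1-x^2)p_n'$, the second summand $\frac{1}{2n}(1-x^2)p_n'$ collapses to $-uv(u^{n-1}+v^{n-1})$, and one is left with the classical Newton identity for power sums in two variables,
\begin{equation*}
(u+v)(u^n+v^n)-uv(u^{n-1}+v^{n-1})=u^{n+1}+v^{n+1}=p_{n+1}.
\end{equation*}
This is exactly the candidate at rank $n+1$, which closes the induction; the base case $n=1$ is immediate since $u+v=x=R_1(x)$.

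To pass to the binomial form, I would expand $(x-1)^n+(x+1)^n$ by the binomial theorem: the odd-index terms carry a factor $(-1)^j+1$ and cancel, while the even-index terms double, giving $(x-1)^n+(x+1)^n=2\sum_{k=0}^{\lfloor n/2\rfloor}\binom{n}{2k}x^{n-2k}$. Dividing by $2^n$ then yields the stated expression $\frac{1}{2^{n-1}}\sum_{k=0}^{\lfloor n/2\rfloor}\binom{n}{2k}x^{n-2k}$, establishing the equality of the two right-hand sides.

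There is no genuine analytic obstacle here: the only non-mechanical step is recognizing the substitution $u,v$ together with the identity $1-x^2=-4uv$ and the clean form of $p_n'$, after which the inductive computation telescopes. I would also remark that the same calculation shows $p_n$ obeys the three-term recurrence $p_{n+1}=xp_n-\frac{x^2-1}{4}p_{n-1}$, whose coefficient $-\frac{x^2-1}{4}$ of $p_{n-1}$ is \emph{not} constant in $x$; this is consistent with, and in fact re-proves, the non-orthogonality at $a=-\frac12$ asserted in Proposition~\ref{prop:non_orthogonal}.
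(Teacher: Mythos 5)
Your proof is correct and follows essentially the same route as the paper: one verifies that $p_n=\left(\frac{x-1}{2}\right)^n+\left(\frac{x+1}{2}\right)^n$ satisfies \eqref{eq:recurrence_relation_Rn} at $a=-\frac12$ (the derivative term collapsing to $\frac{1-x^2}{4}\,p_{n-1}=-uv\,p_{n-1}$, so that the recurrence becomes Newton's identity for power sums) and concludes by uniqueness of the solution of the recurrence, the binomial expansion for the first expression being a harmless addition. One caveat on your closing aside: exhibiting a single three-term recurrence whose $p_{n-1}$-coefficient depends on $x$ does not by itself re-prove Proposition~\ref{prop:non_orthogonal}, since non-orthogonality requires excluding \emph{every} recurrence of the Favard form $R_{n+1}=(\alpha_n x+\beta_n)R_n+\gamma_n R_{n-1}$ with constants $\alpha_n,\beta_n,\gamma_n$, which is what the paper's resultant computation accomplishes.
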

And indeed, for $a=-1/2$, with \eqref{eq:recurrence_varphi} and \eqref{eq:def_R_n} we find
\begin{equation*}
    R_1(x) = x,\quad
    R_2(x) = \frac{x^2+1}{2},
    \quad R_3(x) =\frac{x^3+3x}{2^2} ,\quad 
    R_4(x) = \frac{x^4+6x^2+1}{2^3},\quad \text{etc.}
\end{equation*}
The coefficients in the expansions of the above polynomials are thus connected with the sequence \href{https://oeis.org/A034839}{A034839}, corresponding to the triangular array formed by taking every other term of each row of Pascal's triangle.

\begin{proof}[Proof of Proposition~\ref{prop:a=-1/2}]
It is a direct consequence of the recurrence equation \eqref{eq:recurrence_relation_Rn}. Indeed, an obvious computation shows that 
\begin{multline*}
    \left(\frac{x-1}{2}\right)^{n+1}+\left(\frac{x+1}{2}\right)^{n+1}=x\left(\left(\frac{x-1}{2}\right)^n+\left(\frac{x+1}{2}\right)^n\right)\\+\frac{1-x^2}{4}\left(\left(\frac{x-1}{2}\right)^{n-1}+\left(\frac{x+1}{2}\right)^{n-1}\right),
\end{multline*}
which corresponds to \eqref{eq:recurrence_relation_Rn} with $a=-\frac{1}{2}$. Therefore $R_n(x)$ and $\left(\frac{x-1}{2}\right)^n+\left(\frac{x+1}{2}\right)^n$ have the same initial term for $n=1$ and satisfy the same recurrence, hence are equal for all $n$.
\end{proof}

\begin{prop}
\label{prop:p=1/4_dist}
Let $a=-\frac{1}{2}$ (equivalently  $p=\frac{1}{4}$). For all $n\geq 1$, $-n\leq k\leq n$, $k$ and $n$ having same parity, one has
\begin{equation}
\label{eq:p=1/4_dist}
    \mathbb P(S_{n}=k) = \frac{1}{2^{2n-1}}\binom{2n}{n+k}.
\end{equation}
In particular, for any $n\geq 1$, we have
    \begin{equation*}
        \mathbb P(S_{2n}=0) = \frac{1}{2^{4n-1}}\binom{4n}{2n}\underset{n \to\infty}{\sim} \sqrt{\frac{2}{\pi n}}.
    \end{equation*}
\end{prop}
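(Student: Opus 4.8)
The plan is to read off the law of $S_n$ directly from its characteristic function, using the closed form already established in Proposition~\ref{prop:a=-1/2}. Since $S_n$ is integer-valued and supported on $\{-n,\dots,n\}$, its characteristic function is a trigonometric polynomial $\varphi_n(t)=\sum_{k=-n}^n \mathbb P(S_n=k)\,e^{ikt}$, so I would expand $\varphi_n(t)=R_n(\cos t)$ in the family $\{e^{ikt}\}$ and identify coefficients. This identification is legitimate because the exponentials $e^{ikt}$ are linearly independent (orthogonal on $[-\pi,\pi]$), so the Fourier coefficients are uniquely determined.

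By \eqref{eq:def_R_n} together with Proposition~\ref{prop:a=-1/2}, for $a=-\frac12$ one has $\varphi_n(t)=\left(\frac{\cos t-1}{2}\right)^n+\left(\frac{\cos t+1}{2}\right)^n$. The key observation is that both half-arguments are perfect squares in $e^{\pm it/2}$: writing $\cos t=\frac{e^{it}+e^{-it}}{2}$, I would check that $\frac{\cos t+1}{2}=\frac14(e^{it/2}+e^{-it/2})^2$ and $\frac{\cos t-1}{2}=\frac14(e^{it/2}-e^{-it/2})^2$. Raising to the $n$th power then converts each summand into $\frac{1}{4^n}(e^{it/2}\pm e^{-it/2})^{2n}$, and the binomial theorem, after the index substitution $j=n+k$, yields $(e^{it/2}+e^{-it/2})^{2n}=\sum_{k=-n}^n\binom{2n}{n+k}e^{ikt}$ and $(e^{it/2}-e^{-it/2})^{2n}=\sum_{k=-n}^n(-1)^{n+k}\binom{2n}{n+k}e^{ikt}$.

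Adding the two expansions, the coefficient of $e^{ikt}$ becomes $\frac{1}{4^n}\binom{2n}{n+k}\bigl(1+(-1)^{n+k}\bigr)$, which equals $\frac{2}{4^n}\binom{2n}{n+k}=\frac{1}{2^{2n-1}}\binom{2n}{n+k}$ when $n$ and $k$ share the same parity and vanishes otherwise. Matching against $\sum_k\mathbb P(S_n=k)e^{ikt}$ then gives \eqref{eq:p=1/4_dist}. There is no serious obstacle in this argument; the only delicate point I expect is the parity bookkeeping, namely keeping track of the sign $(-1)^{n+k}$ produced by the second binomial expansion, since it is precisely this factor that cancels the wrong-parity terms and encodes that $S_n$ and $n$ have the same parity.

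For the special value, I would substitute $n\mapsto 2n$ and $k=0$ into \eqref{eq:p=1/4_dist}, obtaining $\mathbb P(S_{2n}=0)=\frac{1}{2^{4n-1}}\binom{4n}{2n}$. The asymptotics then follow from Stirling's formula in the form $\binom{2m}{m}\sim \frac{4^m}{\sqrt{\pi m}}$, applied with $m=2n$, which gives $\binom{4n}{2n}\sim\frac{2^{4n}}{\sqrt{2\pi n}}$ and hence $\mathbb P(S_{2n}=0)\sim\frac{2}{\sqrt{2\pi n}}=\sqrt{\frac{2}{\pi n}}$.
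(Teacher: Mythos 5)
Your proof is correct and follows essentially the same route as the paper: both start from the closed form $\varphi_n(t)=\bigl(\frac{\cos t-1}{2}\bigr)^n+\bigl(\frac{\cos t+1}{2}\bigr)^n$ of Proposition~\ref{prop:a=-1/2} and extract the Fourier coefficients of the characteristic function. The only difference is that you carry out the coefficient extraction explicitly (and for all $k$ at once) via the half-angle factorisations $\frac{\cos t\pm 1}{2}=\frac{1}{4}(e^{it/2}\pm e^{-it/2})^2$ and the binomial theorem, whereas the paper treats only $k=0$ in detail and refers to ``standard integral computations'' for the rest.
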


It is interesting to compare \eqref{eq:p=1/4_dist} with the distribution of the classical symmetric simple random walk on $\mathbb Z$ (corresponding to $p=\frac{1}{2}$ in our model): $\mathbb P(S_{n}=k) = \frac{1}{2^{n}}\binom{n}{\frac{n+k}{2}}$, it is as if all elements of the binomial coefficient should be divided by $2$.

\begin{proof}
For the sake of conciseness, we focus on the case $k=0$. Due to the periodicity of the model and for any memory parameter, one has $\mathbb P(S_{2n-1}=0)=0$. Moreover,
    by Proposition~\ref{prop:a=-1/2} we have
    \begin{equation}
    \label{eq:btGF}
        \mathbb P(S_{2n}=0) =\frac{1}{2\pi} \int_{-\pi}^\pi \varphi_{2n}(t) dt= \frac{1}{2\pi} \int_{-\pi}^\pi \left(\Bigl(\frac{\cos t-1}{2}\Bigr)^{2n}+\Bigl(\frac{\cos t+1}{2}\Bigr)^{2n}\right) \dd t,
    \end{equation}
from which the value given in Proposition~\ref{prop:p=1/4_dist} follows from standard integral computations (or simply taking generating functions in \eqref{eq:btGF}). We obtain similarly the complete distribution of $S_n$, using the relation
\begin{equation*}
   \dP(S_n=k)=\frac{1}{2\pi}\int_{-\pi}^{\pi}\ee^{-ikt}\varphi_n(t)\dd t.\qedhere
\end{equation*}
\end{proof}

\subsection{The case \texorpdfstring{$a=-1$}{of a memory parameter zero}}

Define a triangle of integers $T(n,k)$ for $n\geq 1$ and $0\leq k\leq \lfloor\frac{n-1}{2}\rfloor $ as follows:  
\begin{equation}
    \label{eq:def_triangle}
    T(1,0)=1\quad \text{and for $n\geq 2$,}\quad  T(n,k) = (k+1)T(n-1,k) + (2n-4k)T(n-1,k-1).
\end{equation} 
Further, for all $n\geq 1$, define 
\begin{equation}
    \label{eq:def_poly_U}
    U_n(y ) = \sum_{k=0}^{\lfloor\frac{n-1}{2}\rfloor} T(n,k)y^k. 
\end{equation}
The $T(n,k)$ and $U_n(y)$ for $n$ up to $7$ are reproduced below:
\begin{equation*}
\left\{\begin{array}{lrcl}
    1,& U_1(y) &=& 1,\\
    1,& U_2(y) &=& 1,\\
    1, 2,& U_3(y) &=& 1+2y,\\
    1, 8,& U_4(y) &=& 1+8y,\\
    1, 22, 16,& U_5(y) &=& 1+22y+16y^2,\\
    1, 52, 136,& U_6(y) &=& 1+52y+136y^2,\\
    1, 114, 720, 272,\qquad &U_7(y) &=& 1+114y+720y^2+272y^3.
    \end{array}\right.
\end{equation*}
See \href{https://oeis.org/A101280}{A101280} in the OEIS for various properties and characterizations of these numbers.

\begin{prop}
\label{prop:a=-1}
Let $a=-1$  and $U_n$ be the polynomials defined in \eqref{eq:def_poly_U}. We have for all $n\geq 2$,
\begin{equation}
\label{eq:change_var}
    R_n(x) = \frac{(2x)^{n-2}}{(n-1)!}U_{n-1}\left(\frac{1}{4x^2}\right).
\end{equation}
\end{prop}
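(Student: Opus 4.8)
The plan is to argue by induction on $n\geq 2$, showing that the right-hand side of \eqref{eq:change_var} obeys the recurrence \eqref{eq:recurrence_relation_Rn} with $a=-1$ (namely $R_{n+1}=xR_n+\frac1n(1-x^2)R_n'$) together with the correct initial value. The driving force of the argument is a first-order differential recurrence for the polynomials $U_n$ that repackages the definition \eqref{eq:def_triangle} of the triangle $T(n,k)$, which I would establish first and then transport through the change of variables $y=\frac{1}{4x^2}$.

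First I would convert \eqref{eq:def_triangle} into a recurrence for the generating polynomials $U_n(y)=\sum_k T(n,k)y^k$. Multiplying $T(n,k)=(k+1)T(n-1,k)+(2n-4k)T(n-1,k-1)$ by $y^k$ and summing, I use that $\sum_k (k+1)T(n-1,k)y^k=\frac{\dd}{\dd y}\bigl(yU_{n-1}(y)\bigr)=U_{n-1}+yU_{n-1}'$, while the index shift $j=k-1$ turns the second group into $y\bigl[(2n-4)U_{n-1}-4yU_{n-1}'\bigr]$. Collecting terms yields
\[
U_n(y) = \bigl[1+(2n-4)y\bigr]U_{n-1}(y) + y(1-4y)\,U_{n-1}'(y),
\]
a relation I would cross-check against the tabulated values $U_1,\ldots,U_7$.

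Second, for the induction step I set $y=\frac{1}{4x^2}$ and write $R_n(x)=A_n\,x^{n-2}U_{n-1}(y)$ with $A_n=\frac{2^{n-2}}{(n-1)!}$, noting the useful identity $A_{n+1}=\frac{2}{n}A_n$, so that $A_{n+1}x^{n-1}=\frac{(2x)^{n-1}}{n!}$. Plugging the induction hypothesis into $R_{n+1}=xR_n+\frac1n(1-x^2)R_n'$ and applying the chain rule $\frac{\dd y}{\dd x}=-\frac{1}{2x^3}$, each factor $U_{n-1}'$ enters $R_n'$ with weight $-\frac12 x^{n-5}$. After gathering the coefficients of $x^{n-1}U_{n-1}$, $x^{n-3}U_{n-1}$, $x^{n-3}U_{n-1}'$ and $x^{n-5}U_{n-1}'$, I would substitute $(2n-4)y=\frac{n-2}{2x^2}$ and $y(1-4y)=\frac{x^2-1}{4x^4}$ into the recurrence above and recognise the resulting expression as exactly $\frac{2}{n}A_n\,x^{n-1}U_n(y)=\frac{(2x)^{n-1}}{n!}U_n\bigl(\frac{1}{4x^2}\bigr)$, which is the claimed formula for $R_{n+1}$.

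Finally, the base case $n=2$ reduces to $R_2(x)=1=U_1$, consistent with $R_2(x)=(a+1)x^2-a=1$ at $a=-1$ (see \eqref{eq:first_values_polynomials}). The main obstacle is purely one of bookkeeping: the substitution $y=\frac{1}{4x^2}$ forces Laurent monomials such as $x^{n-5}$ into the intermediate expressions, and the delicate point is to verify that the factor $(1-x^2)$ of \eqref{eq:recurrence_relation_Rn} transforms precisely into the factor $y(1-4y)$ appearing in the $U_n$-recurrence. Since the range $k\leq\lfloor\frac{n-1}{2}\rfloor$ guarantees that all exponents $x^{\,n-2-2k}$ in \eqref{eq:change_var} are nonnegative, both sides are genuine polynomials, so the identity of rational functions proved for $x\neq 0$ upgrades to the desired polynomial identity.
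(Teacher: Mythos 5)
Your proposal is correct and follows essentially the same route as the paper: both arguments hinge on the single first-order differential recurrence $U_n(y)=\bigl(1+(2n-4)y\bigr)U_{n-1}(y)+y(1-4y)U_{n-1}'(y)$, which you derive once from the triangle \eqref{eq:def_triangle} and once (via the substitution $y=\frac{1}{4x^2}$) from \eqref{eq:recurrence_relation_Rn} with $a=-1$, and then conclude by uniqueness/induction from the base case $R_2=U_1=1$. The only cosmetic slip is the degree bound at the end: the polynomial in \eqref{eq:change_var} is $U_{n-1}$, whose top index is $\lfloor\frac{n-2}{2}\rfloor$ (not $\lfloor\frac{n-1}{2}\rfloor$), which is precisely what makes all exponents $x^{n-2-2k}$ nonnegative.
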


\begin{proof}
Let us make a change of function as in \eqref{eq:change_var}. A straightforward computation from \eqref{eq:recurrence_relation_Rn} (with $a=-1  $) shows that the polynomial sequence $(U_n)_{n\geq0}$ defined by \eqref{eq:change_var} must satisfy the following recurrence relation: $U_1(y)=1$ and for $n\geq 2$,
\begin{equation*}
    U_n(y) = (2ny+ 1-4y)U_{n-1}(y) +y(1-4y)U_{n-1}'(y).
\end{equation*}
On the other hand, based on \eqref{eq:def_triangle} it is clear that the polynomials defined in \eqref{eq:def_poly_U} satisfy the same recurrence equation, so we conclude by uniqueness.
\end{proof}

In the OEIS \cite{oeis}, it is mentioned that the generating function of the $U_n(y)$ admits the following closed-form expression:
\begin{equation}
\label{eq:formula_U_n_OEIS}
   \sum_{n\geq 1} U_n(y)\frac{z^n}{n!} =\sum_{n\geq 1, k\geq 0} T(n, k) y^k \frac{z^n}{n!} =  \frac{C(y)(2-C(y))}{\exp\bigl(-z \sqrt{1-4y}\bigr) + 1 - C(y)} - C(y),
\end{equation}
where $C(y) = \frac{1 - \sqrt{1-4y}}{2y}$ is the generating function of Catalan numbers. Notice that this equation holds for $(y,z)$ such that $\vert y\vert <\frac{1}{4}$ and $z\in\mathbb C$.

We now turn to the distribution of the random walk when $a=-1$, and give a new proof of the result. Denote by $A(n,k)$ the Eulerian numbers, defined for $n\geq 1$ and $k\in\{1,\ldots,n\}$. Besides their combinatorial interpretation in terms of increasing rooted trees with $n+1$ nodes and $k$ leaves, the Eulerian numbers admit the following probabilistic interpretation: for $k\in\{1,\ldots,n\}$, $\frac{A(n,k)}{n!}$ is the probability that a sum of $n$ independent uniform random variables on $[0,1]$ lies between $k-1$ and $k$, see \cite{tanny}.

\begin{prop}[\cite{Mahmoud2008,Bertoin2023}]
\label{prop:p=0_dist}
Let $a=-1$ (equivalently $p=0$). For any $n\geq 2$ and $-(n-2)\leq k\leq n-2$ with the same parity as $n$, we have
\begin{equation*}
        \mathbb P(S_{n}=k)=\frac{A\bigl(n-1,\frac{n+k}{2}\bigr)}{(n-1)!}.
    \end{equation*}  
In particular, for any $n\geq 1$, $\mathbb P(S_{2n-1}=0)=0$ and 
    \begin{equation*}
        \mathbb P(S_{2n}=0)=\frac{A(2n-1,n)}{(2n-1)!}\underset{n \to\infty}{\sim} \sqrt{\frac{3}{\pi n}},
    \end{equation*}
where $A(2n-1,n)$ denotes the \href{https://oeis.org/A025585}{central Eulerian number}, equal to $\sum_{k=0}^n (-1)^k(n-k)^{2n-1}\binom{2n}{k}$. 
\end{prop}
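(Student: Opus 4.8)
The plan is to bypass any direct integration and instead identify the characteristic function $\varphi_n$ with the generating polynomial of the Eulerian numbers; Fourier inversion then reads off the law of $S_n$ with no further work. Writing $A(m,j)$ for the Eulerian numbers (permutations of $\{1,\dots,m\}$ with $j-1$ descents, $1\leq j\leq m$), I claim that for $a=-1$ and all $n\geq 2$,
\begin{equation}
\label{eq:star_eulerian}
    \varphi_n(t) = \frac{1}{(n-1)!}\sum_{j=1}^{n-1}A(n-1,j)\,\ee^{i(2j-n)t}.
\end{equation}
Granting \eqref{eq:star_eulerian}, since $\varphi_n(t)=\dE[\ee^{itS_n}]=\sum_k \dP(S_n=k)\ee^{ikt}$ is a finite Fourier series, matching the coefficient of $\ee^{ikt}$ forces $\dP(S_n=k)=\frac{1}{(n-1)!}A(n-1,\frac{n+k}{2})$ whenever $k=2j-n$ for some $1\leq j\leq n-1$, and $0$ otherwise; the parity constraint $k\equiv n\pmod 2$ (equivalently the periodicity of the walk) then gives $\dP(S_{2n-1}=0)=0$. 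The symmetry $A(m,j)=A(m,m+1-j)$ guarantees that the right-hand side of \eqref{eq:star_eulerian} is real and even in $t$, consistent with $\varphi_n(t)=R_n(\cos t)$, and the frequencies $2j-n$ range over $-(n-2),\dots,n-2$, matching the degree $n-2$ of $R_n$ recorded in Lemma~\ref{lem:obvious_properties_EP}.

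To prove \eqref{eq:star_eulerian} I would argue by induction on $n$, using that $\varphi_n$ satisfies the recurrence \eqref{eq:recurrence_varphi} with $a=-1$ together with the classical recurrence for Eulerian polynomials. Setting $x=\ee^{2it}$ and $\mathcal E_m(x)=\sum_{j=1}^m A(m,j)x^{j-1}$, the right-hand side of \eqref{eq:star_eulerian} equals $\frac{1}{(n-1)!}\ee^{-i(n-2)t}\mathcal E_{n-1}(x)$. The base case $n=2$ is immediate, since $\mathcal E_1=1$ yields the constant $1$, matching $\varphi_2(t)=R_2(\cos t)=1$ for $a=-1$. For the inductive step I would substitute the conjectured expression into $\varphi_{n+1}=\cos(t)\varphi_n-\frac1n\sin(t)\varphi_n'$, expressing $\cos t$, $\sin t$ and the $t$-derivative through $x$ (using $\frac{\dd x}{\dd t}=2ix$). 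After clearing the common factor $\frac{1}{n!}\ee^{-i(n-1)t}$, all trigonometric and imaginary terms cancel and the identity collapses to exactly
\begin{equation*}
    \mathcal E_{n}(x) = \bigl(1+(n-1)x\bigr)\,\mathcal E_{n-1}(x) + x(1-x)\,\mathcal E_{n-1}'(x),
\end{equation*}
which is the standard recurrence for Eulerian polynomials. This algebraic reduction is the crux of the proof and the step I expect to demand the most care: the bookkeeping of the half-integer phase shifts produced by $\cos t$ and $\sin t$, and the fixing of the indexing convention so that the two recurrences align, are where errors would creep in. (Alternatively, one could combine Proposition~\ref{prop:a=-1} with the generating function \eqref{eq:formula_U_n_OEIS}, but the direct induction above is more self-contained.)

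Finally, the two displayed consequences follow quickly. Taking $k=0$ and replacing $n$ by $2n$ in the main formula gives $\dP(S_{2n}=0)=\frac{A(2n-1,n)}{(2n-1)!}$, the central Eulerian number; its alternating-sum expression is the specialisation of the Worpitzky-type formula $A(m,j)=\sum_{i=0}^{j-1}(-1)^i\binom{m+1}{i}(j-i)^m$ at $m=2n-1$, $j=n$, where the (vanishing) term $i=n$ is included to match the stated summation range. For the asymptotics I would invoke the probabilistic reading of Eulerian numbers recalled before the statement: if $T_m$ is a sum of $m$ independent uniforms on $[0,1]$, then $\frac{A(m,j)}{m!}=\dP(j-1\leq T_m\leq j)$, so that $\dP(S_{2n}=0)=\dP(n-1\leq T_{2n-1}\leq n)$. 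Since $T_{2n-1}$ has mean $n-\tfrac12$, the midpoint of $[n-1,n]$, and variance $\frac{2n-1}{12}$, the local central limit theorem gives $\dP(S_{2n}=0)\sim \bigl(2\pi\tfrac{2n-1}{12}\bigr)^{-1/2}\sim\sqrt{\tfrac{3}{\pi n}}$, as claimed.
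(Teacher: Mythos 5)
Your proof is correct, but it takes a genuinely different route from the paper. The paper reaches the same key identity ($\varphi_{2n}(t)=\frac{1}{(2n-1)!}\ee^{-2i(n-1)t}A_{2n-1}(\ee^{2it})$) indirectly: it first invokes Proposition~\ref{prop:a=-1} to write $R_{2n}$ in terms of the polynomials $U_{2n-1}$ of the triangle \href{https://oeis.org/A101280}{A101280}, then quotes from the OEIS the identity $(1+x)^{2n-2}U_{2n-1}\bigl(\frac{x}{(1+x)^2}\bigr)=A_{2n-1}(x)$ and specialises $x=\ee^{it}$, before Fourier-inverting at $k=0$ (remarking that the same argument gives the full law). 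You instead prove the Fourier expansion \eqref{eq:star_eulerian} directly by induction, using only the recurrence \eqref{eq:recurrence_varphi} with $a=-1$ and the classical recurrence for Eulerian polynomials; I checked the inductive step, and after factoring out $\frac{1}{2\cdot n!}\ee^{-i(n-1)t}$ the combination $n(x+1)+(n-2)(x-1)=2\bigl(1+(n-1)x\bigr)$ does collapse the identity to exactly $\mathcal E_n=(1+(n-1)x)\mathcal E_{n-1}+x(1-x)\mathcal E_{n-1}'$, as you claim. Your version is more self-contained (no reliance on the unproved OEIS identity, no detour through the $U_n$) and delivers the full distribution in one stroke by matching Fourier coefficients; what it loses is the explicit link to the A101280 triangle that the paper's route exhibits. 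You also supply two items the paper's proof omits: the verification that the stated alternating sum for $A(2n-1,n)$ is the Worpitzky formula with a vanishing extra term, and a derivation of the asymptotics $\sqrt{3/(\pi n)}$ via the uniform-sum interpretation and the local central limit theorem (the interval $[n-1,n]$ being centred at the mean $n-\tfrac12$ of $T_{2n-1}$, with variance $\tfrac{2n-1}{12}$), which the paper states without proof.
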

It suggests that a bijective proof of Proposition~\ref{prop:p=0_dist} should exist, meaning to find a bijection between ERW of length $n$, ending at altitude $k$, and  increasing rooted trees with $n+1$ nodes and $k$ leaves, with the parity condition on $n$ and $k$ mentioned in Proposition~\ref{prop:p=0_dist}. Using the connection between ERW and P\'olya urn models, let's mention that this result was already known  for $p=0$ (see \cite[Sec.~7.2.2]{Mahmoud2008} and \cite[Lem.~2.1]{Bertoin2023}). It is also interesting to note that the P\'olya urn model in this case is equivalent to a time-shifted Internal Diffusion Limited Aggregation  model (also called growth model visited by an explorer in \cite{Diaconis-Fulton91}), for which the distribution had also been obtained in \cite[Thm~1]{Mittelstaedt20}. In these earlier articles, other techniques were used to prove the result.

\begin{proof}[Proof of Proposition~\ref{prop:p=0_dist}]
We prove the statement about the return probability $\mathbb P(S_{2n}=0)$, the exact same proof would allow us to describe the full distribution of the model.
It follows from \eqref{eq:change_var} that
\begin{equation*}
        \mathbb P(S_{2n}=0) 
        =\frac{1}{2\pi} \int_{-\pi}^\pi \varphi_{2n}(t) \dd t 
        =\frac{1}{2\pi} \frac{2^{2n-2}}{(2n-1)!}\int_{-\pi}^\pi \bigl(\cos t\bigr)^{2n-2} U_{2n-1}\Bigl(\frac{1}{4\cos^2 t}\Bigr)\dd t.
\end{equation*}
We now use the following identity, stated in \href{https://oeis.org/A101280}{A101280}:
\begin{equation*}
    (1+x)^{2n-2} U_{2n-1}\left(\frac{x}{(1+x)^2}\right) = \sum_{k=0}^{2n-2} A(2n-1,k+1)x^k = A_{2n-1}(x),
\end{equation*}
where the last identity is the definition of the $(2n-1)$th Eulerian polynomial. Setting $x=\ee^{it}$, we deduce that
\begin{equation}
\label{eq:relation_U_A}
    \bigl(2\cos t\bigr)^{2n-2} U_{2n-1}\Bigl(\frac{1}{4\cos^2 t}\Bigr) = \exp\bigl(- 2i{(n-1)}t\bigr) A_{2n-1}\bigl(\exp(2i t)\bigr).
\end{equation}
(Notice that the left-hand side of \eqref{eq:relation_U_A} is an even function of $t$; so should be the right-hand side, which corresponds to the well-known fact that the $A_{n}$ are reciprocal polynomials.)
We thus have 
\begin{equation*}
        \mathbb P(S_{2n}=0) =\frac{1}{(2n-1)!} \frac{1}{2\pi}\int_{-\pi}^\pi  \exp\bigl(- 2i{(n-1)}t\bigr) A_{2n-1}\bigl(\exp(2i t)\bigr) \dd t=\frac{A(2n-1,n)}{(2n-1)!}.\qedhere
\end{equation*}
\end{proof}
The above proof (mostly \eqref{eq:relation_U_A})\ shows that for $n\geq 1$, 
\begin{equation*}
    \varphi_{2n}(t) = \ee^{-2i(n-1)t} A_{2n-1}(\ee^{2i t}) = \sum_{k=-(n-1)}^{n-1} A(2n-1,k+n)\ee^{2ikt}.
\end{equation*}

\subsection{The case \texorpdfstring{$a=\pm\infty$}{of an infinite memory parameter}}

In this section, we identify the polynomials 
    \begin{equation}
    \label{eq:def_T_n}
        T_n(x)= \lim_{a\to\pm \infty }\frac{R_n(x)}{a^{n-1}},
    \end{equation}
whose existence is guaranteed by Lemma~\ref{lem:obvious_properties_EP} (considered as polynomials in $a$, the $R_n$ have degree $n-1$).
To that purpose, define for any $n\geq 0$ the polynomial $V_n(x)$ such that 
\begin{equation*}
    \tanh^{(n)}(x) = V_n(\tanh x),
\end{equation*}
where $\tanh^{(n)}$ stands for the $n$th derivative of $\tanh$.
For instance, 
\begin{equation*}
    V_0(x)=x,\quad  V_1(x) = 1-x^2 \quad \text{and} \quad V_2(x) =2x^3-2x.
\end{equation*}
See \href{https://oeis.org/A008293}{A008293} and \href{https://oeis.org/A101343}{A101343} for related sequences in the OEIS.

\begin{prop}
\label{prop:a_infty}
For all $n\geq 1$, we have 
\begin{equation}
\label{eq:relation_Tn_Vn}
    T_n(x) = (-1)^{n-1}\frac{V_{n-1}(x)}{(n-1)!}.
\end{equation}
\end{prop}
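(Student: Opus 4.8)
The plan is to derive a recurrence relation for the $T_n(x)$ directly from the defining recurrence \eqref{eq:recurrence_relation_Rn}, then show that the right-hand side of \eqref{eq:relation_Tn_Vn} satisfies the same recurrence with the same initial condition, and conclude by uniqueness. This mirrors the strategy already used in the proofs of Proposition~\ref{prop:a=-1/2} and Proposition~\ref{prop:a=-1}.

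First I would divide \eqref{eq:recurrence_relation_Rn} by $a^n$ and take the limit $a\to\pm\infty$. Writing $R_n(x)=a^{n-1}T_n(x)+O(a^{n-2})$ (legitimate since, by Lemma~\ref{lem:obvious_properties_EP}, $R_n$ has degree $n-1$ in $a$), the term $xR_n(x)/a^n$ contributes $xT_n(x)/a\to 0$, while the term $-\frac{a}{n}(1-x^2)R_n'(x)/a^n = -\frac{1}{n}(1-x^2)\frac{R_n'(x)}{a^{n-1}}$ survives and tends to $-\frac{1}{n}(1-x^2)T_n'(x)$. This yields the clean recurrence
\begin{equation}
\label{eq:recurrence_Tn}
    T_{n+1}(x) = -\frac{1}{n}(1-x^2)T_n'(x),
\end{equation}
with $T_1(x)=1$ (since $R_1(x)=x$ has degree $0$ in $a$ with constant term $x$, so $T_1 = \lim R_1/a^0 = x$; I would double-check the normalisation here, as $R_1(x)=x$ gives $T_1(x)=x$, not $1$, so the indexing of the initial condition needs care).

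Next I would establish the matching recurrence for the $V_n$. Differentiating the identity $\tanh^{(n)}(x)=V_n(\tanh x)$ once more and using $\frac{\dd}{\dd x}\tanh x = 1-\tanh^2 x$, the chain rule gives $\tanh^{(n+1)}(x) = V_n'(\tanh x)(1-\tanh^2 x)$, hence the recurrence $V_{n+1}(x) = (1-x^2)V_n'(x)$, with $V_0(x)=x$. I would then verify that $W_n(x) := (-1)^{n-1}\frac{V_{n-1}(x)}{(n-1)!}$ satisfies \eqref{eq:recurrence_Tn}: substituting gives $W_{n+1}(x) = (-1)^{n}\frac{V_n(x)}{n!} = (-1)^n\frac{(1-x^2)V_{n-1}'(x)}{n!}$, while $-\frac{1}{n}(1-x^2)W_n'(x) = -\frac{1}{n}(1-x^2)(-1)^{n-1}\frac{V_{n-1}'(x)}{(n-1)!} = (-1)^n\frac{(1-x^2)V_{n-1}'(x)}{n!}$, so the two agree. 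Checking the base case $W_1(x)=V_0(x)=x$ then pins down the initial condition consistently, confirming $T_n=W_n$ for all $n\geq 1$ by induction.

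The main obstacle I anticipate is purely bookkeeping rather than conceptual: getting the normalisation and index shift exactly right in the limit \eqref{eq:def_T_n}, particularly verifying that the subleading $O(a^{n-2})$ terms genuinely vanish after dividing by $a^n$ and taking the limit, and reconciling the base case $T_1(x)=x$ with the shifted indexing $V_{n-1}$ in \eqref{eq:relation_Tn_Vn}. Once the recurrence \eqref{eq:recurrence_Tn} and the initial condition are correctly pinned down, the uniqueness argument is immediate and the identification with the $V_n$ is a one-line induction.
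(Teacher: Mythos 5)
Your proposal is correct and follows essentially the same route as the paper: derive the limiting recurrence $T_{n+1}(x)=\frac{x^2-1}{n}T_n'(x)$ with $T_1(x)=x$, match it against the recurrence $V_{n+1}(x)=(1-x^2)V_n'(x)$ coming from the chain rule applied to $\tanh^{(n)}(x)=V_n(\tanh x)$, and conclude by uniqueness. The only blemish is the momentary slip about the base case ($T_1(x)=1$ versus $T_1(x)=x$), which you correctly resolve to $T_1(x)=x=V_0(x)$.
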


\begin{proof}
Using the recurrence relation \eqref{eq:recurrence_relation_Rn}, we immediately obtain that the polynomials $T_n(x)$ defined by \eqref{eq:def_T_n} satisfy the recurrence relation
\begin{equation*}
    T_{n+1}(x) = \frac{x^2-1}{n}T_n'(x)
\end{equation*}
for $n\geq 1$, with the initial value $T_1(x) = x$. Define now the polynomials $V_n$ by the relation \eqref{eq:relation_Tn_Vn}. Using the above recurrence relation for the $T_n$, we deduce that the $V_n$ satisfy the recurrence
\begin{equation*}
    V_{n+1}(x) = (1-x^2)V'_{n}(x).
\end{equation*}
This exactly corresponds to our interpretation of the polynomials $V_n$. If indeed $\tanh^{(n)}(x) = V_n(\tanh x)$, then we have
\begin{equation*}
    \tanh^{(n+1)}(x) = (1-\tanh^2 x)V_n'(\tanh x)=V_{n+1}(\tanh x),
\end{equation*}
concluding the proof.
\end{proof}

\section{Connection with the probabilistic behavior of the model}

In this part, our main objective is to prove Proposition~\ref{prop:values_Rn'(1)}, or its generating-function version stated below:
\begin{prop}
\label{prop:derivative_1}
    We have
    \begin{equation}
    \label{eq:derivative_1_series}
        \sum_{n\geq 1} R_n'(1) x^n = \left\{\begin{array}{ll}
        \displaystyle \frac{x}{(1-x)^2}\frac{1-2  a (1-x)^{1-2 a}}{1-2 a} & \text{if } a\neq \frac{1}{2},\medskip\\
        \displaystyle \frac{x}{(1-x)^2}(1-\ln(1 - x)) & \text{if } a=\frac{1}{2}.
        \end{array}\right.
\end{equation}
\end{prop}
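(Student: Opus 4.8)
The plan is to reduce everything to the single sequence $u_n := R_n'(1)$ and to produce a first-order linear ODE for its generating function $F(x) = \sum_{n\geq 1} u_n x^n$. First I would differentiate the defining recurrence \eqref{eq:recurrence_relation_Rn}, writing
\begin{equation*}
    R_{n+1}'(x) = R_n(x) + xR_n'(x) - \frac{a}{n}\bigl(-2xR_n'(x) + (1-x^2)R_n''(x)\bigr),
\end{equation*}
and evaluate at $x=1$. The key simplification is that the factor $(1-x^2)$ kills the $R_n''$ term at $x=1$; combined with $R_n(1)=1$ from Lemma~\ref{lem:obvious_properties_EP}, this gives the clean first-order recurrence
\begin{equation*}
    u_{n+1} = 1 + \frac{n+2a}{n}\,u_n, \qquad u_1 = 1,
\end{equation*}
equivalently $n u_{n+1} = n + (n+2a)u_n$. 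The point is that the vanishing of $1-x^2$ makes the recurrence close on the single quantity $R_n'(1)$, without dragging in higher derivatives.

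Next I would translate this recurrence into an equation for $F$. Multiplying $n u_{n+1} = n + (n+2a)u_n$ by $x^n$ and summing over $n\geq 1$, using the elementary identities $\sum_{n\geq 1} n u_{n+1}x^n = F'(x) - F(x)/x$ (the $m=1$ term drops out after reindexing), $\sum_{n\geq 1} n u_n x^n = xF'(x)$ and $\sum_{n\geq 1} n x^n = x/(1-x)^2$, I obtain the first-order linear ODE
\begin{equation*}
    (1-x)F'(x) - \Bigl(\tfrac1x + 2a\Bigr)F(x) = \frac{x}{(1-x)^2}.
\end{equation*}
The initial data is encoded in the requirement that $F$ be a power series with $F(x)/x \to u_1 = 1$ as $x\to 0$, which will pin down the single constant of integration.

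I would then solve the ODE by variation of parameters. Via the partial fraction $\frac{1/x+2a}{1-x} = \frac1x + \frac{1+2a}{1-x}$, the homogeneous equation integrates to $F_{\mathrm{hom}}(x) = \frac{x}{(1-x)^{1+2a}}$, and writing $F = F_{\mathrm{hom}}\cdot C$ reduces the problem to $C'(x) = (1-x)^{2a-2}$. Here the case split appears naturally: for $a\neq\frac12$ one integrates to the power $\frac{(1-x)^{2a-1}}{1-2a}$, whereas for $a=\frac12$ the exponent is $-1$ and one gets $-\ln(1-x)$. Fixing the constant through $F(x)/x\to 1$ and simplifying (factoring out $\frac{x}{(1-x)^2}$) yields exactly the two formulas in \eqref{eq:derivative_1_series}.

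I expect the main obstacle to be bookkeeping rather than anything conceptual: handling the index shift and the vanishing $m=1$ term in $\sum_{n\geq 1} n u_{n+1}x^n$ so that the $1/x$ singularity in the ODE stays consistent with $F$ being analytic at the origin, and correctly fixing the integration constant separately in the two cases. As an alternative route bypassing the ODE, one can solve the recurrence explicitly by telescoping, obtaining $u_n = \frac{1}{1-2a}\bigl(n - \frac{\Gamma(n+2a)}{\Gamma(n)\Gamma(2a)}\bigr)$ for $a\neq\frac12$, and then sum the two resulting series using $\sum_{n\geq 1} n x^n = \frac{x}{(1-x)^2}$ and $\sum_{n\geq 1}\frac{\Gamma(n+2a)}{\Gamma(n)\Gamma(2a)}x^n = x\frac{\dd}{\dd x}(1-x)^{-2a} = \frac{2ax}{(1-x)^{1+2a}}$; the $a=\frac12$ formula then follows by letting $a\to\frac12$ and using $\lim_{a\to1/2}\frac{1-2a(1-x)^{1-2a}}{1-2a} = 1-\ln(1-x)$.
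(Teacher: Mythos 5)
Your proposal is correct and follows essentially the same route as the paper: differentiating \eqref{eq:recurrence_relation_Rn} at $x=1$ to get the first-order recurrence \eqref{eq:rec__Rn'(1)}, multiplying by $n$ to obtain the ODE $(1-x)F'(x)-(\tfrac{1}{x}+2a)F(x)=\tfrac{x}{(1-x)^2}$, and solving it in closed form with the natural case split at $a=\tfrac12$. Your explicit integration by variation of parameters and the alternative telescoping of the recurrence simply supply details the paper leaves as ``standard''.
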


\begin{proof}
First note that $R_1'(1)=1$ by \eqref{eq:first_values_polynomials}. Then take the derivative of \eqref{eq:recurrence_relation_Rn} and evaluate the new identity at $x=1$. This way, we obtain for $n\geq 1$
\begin{equation}
\label{eq:rec__Rn'(1)}
   R_{n+1}'(1)=R_n(1)+R_n'(1)+\frac{2a}{n}R_n'(1) = 1+ R_n'(1)\left(1+\frac{2a}{n}\right),
\end{equation}
where we have simplified $R_n(1)=1$ by Lemma~\ref{lem:obvious_properties_EP}. To proceed, we can just check that the sequence 
\begin{equation*}
    \frac{\Gamma(n + 2)\Gamma(2a) - \Gamma( n + 1+2a )}{(1 - 2a)\Gamma(2a)\Gamma(n + 1)},
\end{equation*}
which appears in the right-hand side of \eqref{eq:values_Rn'(1)}, satisfies the same recurrence as \eqref{eq:rec__Rn'(1)} and same initial value for $n=1$. So it should coincide with $R_n'(1)$.

A more constructive proof consists in multiplying \eqref{eq:rec__Rn'(1)} by $n$, and to deduce that the series $y(x) = \sum_{n\geq 1}R_n'(1)x^n$ satisfies the differential equation
\begin{equation*}
   (1 - x)y'(x) - \left(\frac{1}{x} + 2a\right)y(x) = \frac{x}{(1 - x)^2}   
\end{equation*}
with initial condition ($y(0)=0$ and)\ $y'(0)=1$. It is standard to solve this order-one differential equation in closed-form, and to conclude to \eqref{eq:derivative_1_series}. The formula \eqref{eq:values_Rn'(1)} follows from a Taylor expansion of the series \eqref{eq:derivative_1_series}.
\end{proof}

Note that the series $\sum_{n\geq 1} R_n^{(k)}(1) x^n$ corresponding to higher-order derivatives $(k\geq 2)$ admit similar, but considerably more complicated, closed-form expressions.

\paragraph{Acknowledgments.} We would like to thank Alin Bostan for interesting discussions at an early stage of the project, as well as Jean Bertoin and Philippe Nadeau for their valuable comments.

\small
\bibliographystyle{abbrv}

\end{document}